	\newtheoremstyle{slanted}
	{}
	{}
	{\slshape}
	{}
	{\bfseries}
	{.}
	{ }
	{}
	\theoremstyle{slanted}
	\newtheorem{theo}{Theorem}[section]
	\newtheorem{prop}[theo]{Proposition}
	\newtheorem{lemma}[theo]{Lemma}
	\newtheorem{remark}[theo]{Remark}
	\def\egdef{:=}
	\newcommand{\tend}[3][]{\xrightarrow[#2\to#3]{#1}}
	\newcommand{\EE}{\mathbb{E}}
	\newcommand{\Var}{\text{Var}}
	\def\esp#1{{\EE}\left[#1\right]}
	\def\espc#1#2{{\EE}\left[#1\,\left|\,#2\right.\right]}
	\def\ind#1{\mathbbmss{1}_{#1}}
	\newcommand{\ZZ}{\mathbb{Z}}
	\newcommand{\RR}{\mathbb{R}}
	\newcommand{\PP}{\mathbb{P}}
\newcommand{\Pb}{\mu^{\text{B}}}
\newcommand{\mant}{\mathscr{M}}
\newcommand{\convURI}{\xrightarrow{\text{URI}}}
\newcommand{\E}{{\bm{E}}}
\newcommand{\X}{{\bm{X}}}
\title{Averaging along Uniform Random Integers}
\author{Élise Janvresse  and Thierry de la Rue}
\address{Laboratoire de Math\'ematiques Rapha\"el Salem\\
	UMR 6085 CNRS -- Universit\'e de Rouen\\
	Avenue de l'Universit\'e\\
	B.P. 12\\
	F76801 Saint-\'Etienne-du-Rouvray Cedex}
\email{Elise.Janvresse@univ-rouen.fr}  \email{Thierry.de-la-Rue@univ-rouen.fr}
\begin{document}
\bibliographystyle{amsplain}

\begin{abstract}
Motivated by giving a meaning to ``The probability that a random integer has initial digit $d$'', we define a \emph{URI-set} as a random set $\E$ of natural integers such that each $n\ge1$ belongs to $\E$ with probability $1/n$, independently of other integers. This enables us to introduce two notions of densities on natural numbers: The \emph{URI-density}, obtained by averaging along the elements of $\E$, and the \emph{local URI-density}, which we get by considering the $k$-th element of $\E$ and letting $k$ go to $\infty$. We prove that the elements of $\E$ satisfy Benford's law, both in the sense of URI-density and in the sense of local URI-density. Moreover, if $b_1$ and $b_2$ are two multiplicatively independent integers, then the mantissae of a natural number in base $b_1$ and in base $b_2$ are independent. Connections of URI-density and local URI-density with other well-known notions of densities are established: Both are stronger than the natural density, and URI-density is equivalent to $\log$-density. We also give a stochastic interpretation, in terms of URI-set, of the $H_\infty$-density.
\end{abstract}

\keywords{Benford's law, log-density, $H_\infty$-density, uniform random integers}
\subjclass[2000]{11A63, 11B05, 60G50, 60G55}
\maketitle

\section{Introduction}
\subsection{Benford's law and Flehinger's theorem}
Benford's law describes the empirical distribution of the leading digit of everyday-life numbers. It was first discovered by the astronomer Simon Newcomb in 1881 \cite{newcomb} and named after the physicist Franck Benford who independently rediscovered the phenomenon in 1938 \cite{benford}. According to this law, the proportion of numbers in large series of empirical data with leading digit $d\in\{1,2,\ldots,9\}$ is $\log_{10}\left(1+1/d\right)$. More generally, defining the \emph{mantissa} $\mant(x)$ of a positive real number $x$ as the only real number in $[1,10[$ such that $x=\mant(x) 10^k$ for some integer $k$, Benford's law states that for any $1\le \alpha<\beta<10$, the proportion of numbers whose mantissa lies in $[\alpha,\beta]$ is $\log_{10}\beta-\log_{10}\alpha$. 

Giving Benford's law a mathematical meaning requires to formalize the notion of ``everyday-life numbers'', which is far from obvious. However there have been many attempts to explain mathematically the ubiquity of this distribution in empirical datasets. One of them is Betty J. Flehinger's theorem, published in 1965 in an article entitled \emph{On the probability that a random integer has initial digit $A$} \cite{flehinger}.
It occurred to Flehinger that the most natural set of numbers on which we should verify Benford's distribution is the whole set of positive integers. Unfortunately, defining
$$ P_n^1(d) \egdef \dfrac{1}{n} \sum_{j=1}^n \ind{[d,d+1[}\bigl(\mant(j)\bigr) $$
(that is the proportion of integers between 1 and $n$ with leading digit $d$), we see that the sequence $P_n^1(d)$ has no limit as $n\to\infty$: It oscillates over longer and longer periods. Flehinger's idea was then to seek the limit by iteration of the process of Cesaro averaging: She inductively set
$$  P_n^{k+1}(d) \egdef \dfrac{1}{n} \sum_{j=1}^nP_j^k(d), $$
and proved that
\begin{equation}
 \label{eq:Flehinger's theorem}
\lim_{k\to\infty}\limsup_{n\to\infty} P_n^k(d) = \lim_{k\to\infty}\liminf_{n\to\infty} P_n^k(d) = \log_{10} \left(1+\dfrac{1}{d}\right),
\end{equation}
which is the proportion predicted by Benford's law. (Donald Knuth generalized Flehinger's theorem to the distribution of the whole mantissa in 1981 \cite{knuth}.)

\medskip

In spite of its title, Flehinger's article has no probabilistic content. A good reason is that there is no way of picking an integer uniformly at random in the set of all natural numbers. The first motivation for the present work was nevertheless to translate Flehinger's theorem in the context of probability theory: How can we interpret the \emph{probability} that a (random) integer has a given initial digit? Our purpose is thus to give a meaning to the sentence ``An integer picked uniformly at random has such a property''.

\subsection{Roadmap of the paper}
We construct in Section~\ref{sec:URI-set} a random infinite set $\E$ of integers, which we call a \emph{URI-set}, such that averaging along the elements of $\E$ reflects the expected behaviour of a random integer. 
This random set enables us to introduce two notions of densities on natural numbers: The \emph{URI-density} (see Section~\ref{sec:URI-density}), obtained by averaging along the elements of $\E$, and the \emph{local URI-density} (see Section~\ref{sec:localURI-density}), which we get by considering the $k$-th element of $\E$ and letting $k$ go to $\infty$. We prove that the elements of $\E$ satisfy Benford's law, both in the sense of URI-density (Theorem~\ref{thm:benford}), and in the sense of local URI-density (Theorem~\ref{thm:single}). Our point of view also enables to consider simultaneously the mantissae of a number in different bases, and in particular we prove a result which can be interpreted as follows: If $b_1$ and $b_2$ are two multiplicatively independent integers, then the mantissae of a natural number in base $b_1$ and in base $b_2$ are independent (Theorem~\ref{thm:base}). Connections of URI-density and local URI-density with other well-known notions of densities are established: We prove that both are stronger than the natural density (Theorems~\ref{thm:cesaro} and~\ref{thm:natural_implies_local_URI}), and that in fact URI-density is equivalent to $\log$-density (Theorem~\ref{thm:log-density}). We finish in Section~\ref{sec:discussion} by giving a stochastic interpretation, in terms of URI-set, of the $H_\infty$-density used in Flehinger's theorem, and by raising some open problems.

 The construction of the random set of integers is inspired by a previous article by the same authors~\cite{janvresse}, where a probabilistic proof of Flehinger's theorem was provided. We summarize this proof in Section~\ref{sec:Markov}

\subsection*{Acknowledgements}
The authors are grateful to Bodo Volkmann for stimulating questions.

\section{Uniform random set of integers}
\label{sec:URI-set}

\subsection{Flehinger's theorem through Markov chain}
\label{sec:Markov}

We introduce a homogeneous Markov chain $(M_k)_{k\ge 0}$ taking values in $[1,10[$, defined by its initial value $M_0$ (which can be deterministic or random) and the following transition probability: for any Borel set $S\subset[1,10[$,
\begin{equation}
 \label{eq:transition probability}
\PP \bigl(M_{k+1}\in S \big| M_k=a\bigr) \egdef \PP \Bigl(\mant(aU)\in S\Bigr),
\end{equation}
where $U$ is a uniform random variable in $[0,1]$. 

Let us denote by $\Pb$ the probability distribution on $[1,10[$ given by Benford's law: For any $1\le t<10$,
$$ \Pb([1,t]) \egdef \log_{10} t. $$
It is proved in~\cite{janvresse} by a standard coupling argument that
\begin{itemize}
    \item $\Pb$ is the only probability distribution on $[1,10[$ which is invariant under the probability transition~\eqref{eq:transition probability};
    \item Whatever choice we make for the initial condition $M_0$, we have for any Borel set $S\subset[1,10[$ and for all $k\ge1$
\begin{equation}
 \label{eq:speed of convergence}
 \bigl| \PP(M_k\in S) - \Pb(S) \bigr| \le \left(\dfrac{9}{10}\right)^k.
\end{equation}
\end{itemize}
A connection is made between the quantities $P_n^k(d)$, $n\ge1$, and the $k$-th step of our Markov chain: It is established in~\cite{janvresse} that for all $a\in[1,10[$ and all $k\ge1$,
$$ \lim_{j\to\infty} P^k_{\lfloor a 10^j \rfloor} (d) = \PP \Bigl(M_{k}\in [d,d+1[ \,\big|\, M_0=a\Bigr). $$
A proof of~\eqref{eq:Flehinger's theorem}, with an estimation of the speed of convergence, follows:
We get for all $k\ge1$
\begin{equation}
 \begin{split}
&\Bigl| \liminf_{n\to\infty} P_n^k(d) - \Pb\Bigl([d,d+1[\Bigr) \Bigr| \le \left(\dfrac{9}{10}\right)^k \\
\text{and}\quad
&\Bigl| \limsup_{n\to\infty} P_n^k(d) - \Pb\Bigl([d,d+1[\Bigr) \Bigr| \le \left(\dfrac{9}{10}\right)^k.
\end{split}
\end{equation}

\subsection{Construction of a random set of integers}
We can interpret the Markov chain  $(M_k)$ as the sequence of mantissae of positive random variables $X_k$, where the sequence $(X_k)$ is itself a Markov chain such that, given $X_0,\ldots,X_k$, $X_{k+1}$ is uniformly distributed in $]0,X_k[$. Then $\X\egdef\{X_k,\ k\ge0\}$ is a discrete random set of real numbers which satisfies the following property: For any $t>0$, conditionally to the fact that $\X\cap]t,\infty[\neq\emptyset$, $\max(\X\cap]0,t])$ is uniformly distributed in $]0,t]$, and independent of $\X\cap]t,\infty[$.  

Our idea is thus to imitate the structure of this random set of reals, but inside the set of natural numbers. 
We are looking for a random infinite set of integers $\E$ satisfying
\begin{enumerate}
 \item[(U)] for all $n\ge1$, $\max (\E\cap\{1,\ldots,n\})$ is uniformly distributed in $\{1,\ldots,n\}$;
 \item[(I)] for all $n\ge1$, $\max (\E\cap\{1,\ldots,n\})$ is independent of $\E\cap\{n+1,n+2,\ldots\}$.
\end{enumerate}
For such a random set $\E$, we must have by (U), for each $n\ge1$, 
$$\PP(n\in\E)=\PP\bigl(\max (\E\cap\{1,\ldots,n\})=n\bigr)=1/n,$$ 
and (I) implies that all events $(n\in\E)$ are independent.

Conversely, picking elements of $\E$ using independent Bernoulli random variables, with $\PP(n\in\E)=1/n$ for each $n\ge1$ gives a random set satisfying the required conditions. Indeed, for each $j\in\{1,\ldots,n\}$, we get
\begin{multline*}
 \PP\Bigl(\max (\E\cap\{1,\ldots,n\})=j\Bigr) = \PP\Bigl(j\in \E, j+1\notin\E,\ldots,n\notin\E\Bigl) \\
= \dfrac{1}{j} \dfrac{j}{j+1}\cdots \dfrac{n-1}{n} = \dfrac{1}{n}.
\end{multline*}
Observe also that, with probability 1, the cardinality of $\E$ is infinite.

Because of the uniformity property (U), such a random set $\E$ appears as a good way to modelize the uniform distribution in the set of natural numbers and will therefore be referred to as a set of \emph{uniform random integers}, or \emph{URI-set}.

\section{URI-density and Benford's law}
{From} now on, $\E$ denotes a URI-set, and we denote its ordered elements by 
$$ \E = \{N_1=1<N_2<\ldots<N_k<\ldots\}.$$
For each $n\ge1$, we set $\E_n\egdef \E\cap\{1,\ldots,n\}$.

It will be useful to give the following estimation of $|\E_n|$.

\begin{lemma}
 \label{lemma:E_n}
$$ \dfrac{|\E_n|}{\ln n}\tend[a.s.]{n}{\infty} 1. $$
\end{lemma}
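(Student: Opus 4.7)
The plan is as follows. By the construction of the URI-set, $|\E_n|=\sum_{k=1}^n X_k$, where $X_k\egdef\ind{k\in\E}$ are independent Bernoulli variables with $\PP(X_k=1)=1/k$. Hence $\esp{|\E_n|}=H_n\egdef\sum_{k=1}^n 1/k$, which satisfies $H_n/\ln n\to 1$, so it is enough to prove $|\E_n|/H_n\to 1$ almost surely.

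My primary approach would be to apply Kolmogorov's strong law of large numbers for independent square-integrable summands with weights $b_n\egdef H_n$. Using $\Var(X_k)=\tfrac{1}{k}(1-\tfrac{1}{k})\le 1/k$, one checks that $H_n\uparrow\infty$ and
$$ \sum_{k\ge2}\dfrac{\Var(X_k)}{H_k^2}\le \sum_{k\ge2}\dfrac{1}{k H_k^2}<\infty, $$
the last series converging by comparison with $\int dx/(x\ln^2 x)$. Kolmogorov's theorem then yields $(|\E_n|-H_n)/H_n\to 0$ almost surely, hence the lemma.

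An elementary alternative, which avoids invoking the non-i.i.d. SLLN, is to combine Chebyshev's inequality with Borel--Cantelli along a subsequence. Since $\Var(|\E_n|)\le H_n$, Chebyshev gives $\PP\bigl(\bigl||\E_n|-H_n\bigr|>\varepsilon H_n\bigr)\le 1/(\varepsilon^2 H_n)$. Choosing $n_j\egdef\lfloor e^{j^2}\rfloor$ makes $H_{n_j}\sim j^2$, so $\sum_j 1/H_{n_j}<\infty$ and Borel--Cantelli yields $|\E_{n_j}|/H_{n_j}\to 1$ a.s. The monotonicity of $n\mapsto|\E_n|$ and $n\mapsto H_n$ then gives the sandwich
$$\dfrac{|\E_{n_j}|}{H_{n_{j+1}}}\le \dfrac{|\E_n|}{H_n}\le \dfrac{|\E_{n_{j+1}}|}{H_{n_j}}\qquad (n_j\le n< n_{j+1}),$$
and since $H_{n_{j+1}}/H_{n_j}\to 1$, both bounds tend to $1$ almost surely, extending the convergence to the whole sequence.

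The main (minor) obstacle is really just the bookkeeping of these elementary estimates; conceptually nothing is delicate, because the URI-set is by construction a sum of independent Bernoullis and the expected cardinality $H_n$ is perfectly asymptotic to $\ln n$.
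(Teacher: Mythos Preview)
Your primary approach is exactly the paper's: the authors invoke Petrov's theorem (which for $p=2$ is precisely Kolmogorov's SLLN criterion) with $Z_n=\ind{\E}(n)-1/n$ and $t_n=\ln n$, checking $\sum_n \esp{Z_n^2}/(\ln n)^2<\infty$; using $H_n$ in place of $\ln n$ is an immaterial variation since $H_n\sim\ln n$. Your alternative via Chebyshev, Borel--Cantelli along $n_j=\lfloor e^{j^2}\rfloor$, and the monotonicity sandwich is also correct and has the merit of being fully self-contained, at the cost of a few more lines of bookkeeping.
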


We recall Theorem~12 page 272 in Petrov's book~\cite{petrov}: 

\begin{theo}
 \label{thm:Petrov}
Let $(Z_n)$ be a sequence of independent centered real-valued random variables $Z_n$. If $t_n\uparrow\infty$ and
$$\sum_{n\ge 1} \dfrac{\esp{Z_n^p}}{t_n^p} <\infty ,$$
for some $p$, $1\le p\le 2$, then 
$$\dfrac{\sum_{j=1}^n Z_j}{t_j} \tend[a.s.]{n}{\infty} 0. $$
\end{theo}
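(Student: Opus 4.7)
The plan is to write $|\E_n|$ as a sum of the independent Bernoulli indicators $B_k \egdef \ind{k\in\E}$ with $\PP(B_k=1)=1/k$, so that $|\E_n|=\sum_{k=1}^n B_k$, and then compare it with its mean $H_n\egdef\sum_{k=1}^n 1/k$. Since $H_n = \ln n + \gamma + o(1)$, proving the lemma amounts to showing that the fluctuations $|\E_n|-H_n$ are $o(\ln n)$ almost surely; this is exactly the type of statement that Theorem~\ref{thm:Petrov} is designed to provide.

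First I would set $Z_k\egdef B_k-1/k$, so that the $Z_k$ are independent and centered, with second moment bounded by
\[ \esp{Z_k^2} = \dfrac{1}{k}\left(1-\dfrac{1}{k}\right)\le \dfrac{1}{k}. \]
Choosing $t_n\egdef\ln n$ for $n\ge 2$ (adjusting the first term to avoid $t_1=0$ is harmless, as it only modifies the partial sum by a bounded constant), one has with $p=2$
\[ \sum_{n\ge 2}\dfrac{\esp{Z_n^2}}{t_n^2}\le \sum_{n\ge 2}\dfrac{1}{n(\ln n)^2}<\infty, \]
so Theorem~\ref{thm:Petrov} applies and yields
\[ \dfrac{1}{\ln n}\sum_{k=1}^n Z_k = \dfrac{|\E_n|-H_n}{\ln n}\tend[a.s.]{n}{\infty}0. \]

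Combining this with the classical expansion $H_n/\ln n\to 1$, I obtain $|\E_n|/\ln n\to 1$ almost surely, which is the conclusion. I do not expect any genuine obstacle: the only mildly delicate point is matching the normalization in Petrov's statement (which as displayed in the excerpt has a slight typographical oddity), but once read as $(\sum_{j=1}^n Z_j)/t_n\to 0$ the application is direct and the summability check above is the entire analytic content.
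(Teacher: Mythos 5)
Your proposal does not prove the statement it is supposed to prove. The statement in question is Theorem~\ref{thm:Petrov} itself --- the general strong law for independent centered variables $(Z_n)$ with $\sum_n \esp{Z_n^p}/t_n^p<\infty$ --- whereas what you have written is an \emph{application} of that theorem: you take $Z_k=\ind{\E}(k)-1/k$, $t_n=\ln n$, $p=2$, verify the summability hypothesis, and invoke Theorem~\ref{thm:Petrov} to conclude that $|\E_n|/\ln n\to 1$ a.s. That is precisely the paper's proof of Lemma~\ref{lemma:E_n}, and as such it is fine; but as a proof of Theorem~\ref{thm:Petrov} it is circular, since the theorem is used as a black box rather than established. (For the record, the paper does not prove this theorem either: it is quoted from Petrov's book, Theorem~12, p.~272, and only ever applied with $p=2$.)

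If you actually wanted to prove the statement, the standard route is: for $p=2$, the hypothesis says $\sum_n \Var(Z_n/t_n)<\infty$, so by Kolmogorov's one-series theorem the series $\sum_n Z_n/t_n$ converges almost surely, and since $t_n\uparrow\infty$, Kronecker's lemma yields $t_n^{-1}\sum_{j=1}^n Z_j\to 0$ a.s. (this also shows the displayed conclusion should be read with $t_n$, not $t_j$, in the denominator, as you correctly suspected). For general $1\le p\le 2$ one replaces the variance computation by a $p$-th moment inequality for sums of independent centered variables (von Bahr--Esseen: $\esp{|\sum X_i|^p}\le 2\sum\esp{|X_i|^p}$), which again gives a.s.\ convergence of $\sum_n Z_n/t_n$ and then Kronecker's lemma finishes. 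None of this appears in your write-up, so as an answer to the assigned statement there is a genuine gap: the entire analytic content of the theorem is missing.
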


\begin{proof}[Proof of Lemma~\ref{lemma:E_n}]
We consider the independent centered random variables $Z_n\egdef \ind{\E}(n)-1/n$. Since 
$$
\sum_{n\ge 1} \dfrac{\esp{Z_n^2}}{(\ln n)^2} 
\le \sum_{n\ge 1} \left(1-\dfrac{1}{n}\right)\dfrac{1}{n(\ln n)^2} <\infty ,
$$
we get by Theorem~\ref{thm:Petrov}
\begin{equation}
\label{eq:lnN}
 \dfrac{\sum_{j=1}^n Z_j}{\ln n} = \dfrac{1}{\ln n}\sum_{j=1}^n \ind{\E}(j)- \dfrac{1}{\ln n}\sum_{j=1}^n 1 /j \tend[a.s.]{n}{\infty} 0\, .
\end{equation}
This concludes the proof of the lemma.
\end{proof}

\subsection{URI-density}
\label{sec:URI-density}
We say that a subset $A$ of the set of natural numbers has \emph{URI-density} $\alpha$ if 
$$ \dfrac{1}{n} \sum_{k=1}^n\ind{A}(N_k) \tend[a.s.]{n}{\infty} \alpha. $$
Note that an equivalent formulation is
$$ \dfrac{1}{|\E_n|} \sum_{j\in\E_n} \ind{A}(j) \tend[a.s.]{n}{\infty} \alpha. $$


As we expect, the URI-density generalizes the notion of natural density.
\begin{theo}
\label{thm:cesaro}
Let $A\subset\ZZ_+$. If $\dfrac{1}{n}\sum_{j=1}^n \ind{A}(j)\tend{n}{\infty}\alpha$, then $A$ has URI-density~$\alpha$.
\end{theo}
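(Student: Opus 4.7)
The plan is to reduce to Lemma~\ref{lemma:E_n} and the Petrov theorem already quoted, then match up the two averages via a classical Abel-summation step (the fact that natural density implies $\log$-density).

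First, rewrite the URI-average as a sum over all integers weighted by $\ind{\E}$:
\[
\sum_{j\in\E_n}\ind{A}(j)=\sum_{j=1}^n \ind{A}(j)\ind{\E}(j).
\]
The natural idea is to center the Bernoulli indicators. Define the independent centered random variables
\[
Z_j \egdef \ind{A}(j)\bigl(\ind{\E}(j)-1/j\bigr),\qquad j\ge 1.
\]
Their second moments satisfy $\EE[Z_j^2]\le \ind{A}(j)/j\le 1/j$, so
\[
\sum_{j\ge 2}\frac{\EE[Z_j^2]}{(\ln j)^2}\le \sum_{j\ge 2}\frac{1}{j(\ln j)^2}<\infty,
\]
and Theorem~\ref{thm:Petrov} (applied with $t_n=\ln n$ and $p=2$, exactly as in the proof of Lemma~\ref{lemma:E_n}) gives
\[
\frac{1}{\ln n}\sum_{j=1}^n \ind{A}(j)\ind{\E}(j)-\frac{1}{\ln n}\sum_{j=1}^n \frac{\ind{A}(j)}{j}\tend[a.s.]{n}{\infty} 0.
\]

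The next step is the deterministic identity: if $\frac{1}{n}\sum_{j=1}^n \ind{A}(j)\to\alpha$, then $\frac{1}{\ln n}\sum_{j=1}^n \ind{A}(j)/j\to\alpha$. This is a standard Abel summation argument: setting $S_n\egdef\sum_{j=1}^n\ind{A}(j)$ one gets
\[
\sum_{j=1}^n \frac{\ind{A}(j)}{j}=\frac{S_n}{n}+\sum_{j=1}^{n-1}\frac{S_j}{j(j+1)},
\]
and replacing $S_j$ by $\alpha j + o(j)$ yields the announced limit (in other words, natural density implies $\log$-density, a classical fact).

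Combining these two steps, $\frac{1}{\ln n}\sum_{j=1}^n \ind{A}(j)\ind{\E}(j)\to \alpha$ almost surely. Finally, Lemma~\ref{lemma:E_n} gives $|\E_n|/\ln n\to 1$ a.s., so
\[
\frac{1}{|\E_n|}\sum_{j\in\E_n}\ind{A}(j)=\frac{\ln n}{|\E_n|}\cdot\frac{1}{\ln n}\sum_{j=1}^n\ind{A}(j)\ind{\E}(j)\tend[a.s.]{n}{\infty}\alpha,
\]
which is exactly the URI-density statement. No single step looks genuinely difficult; the only mildly delicate point is the verification that Petrov's theorem applies (choosing the right normalization $\ln n$ so the second-moment series converges) and carrying out the Abel summation cleanly — essentially the same pattern as in Lemma~\ref{lemma:E_n}, which is why the result was placed right after it.
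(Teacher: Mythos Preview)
Your proof is correct, but it takes a genuinely different route from the paper's own argument. You essentially establish Theorem~\ref{thm:log-density} first (your Petrov step is verbatim the proof of that theorem) and then combine it with the classical implication ``natural density $\Rightarrow$ log-density'' via Abel summation. This is shorter and cleaner than what the paper does for Theorem~\ref{thm:cesaro}.

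The paper instead gives a direct, self-contained argument that does not pass through log-density: it orders $\E_n$ backwards as $Y_1^{(n)}>\cdots>Y_{|\E_n|}^{(n)}$, uses properties~(U) and~(I) to identify the conditional expectations $K_i^n=\espc{a_{Y_i^{(n)}}}{Y_{i-1}^{(n)}}$ as Cesaro averages (hence close to $\alpha$), and then controls the centered martingale-difference part by orthogonality, a variance bound along the subsequence $n_m=\lfloor e^{m^2}\rfloor$, and Borel--Cantelli. What this buys the paper is a template --- the backwards Markov structure and the decomposition into conditional means plus fluctuations --- that is reused later, notably in the stochastic interpretation of $H_\infty$-density in Section~\ref{sec:discussion} (and, in an extended form, in the iterated-Cesaro setting). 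Your approach, by contrast, makes the logical dependence transparent: Theorem~\ref{thm:cesaro} is really a corollary of Theorem~\ref{thm:log-density} plus a deterministic Tauberian fact, and your closing remark about ``the same pattern as in Lemma~\ref{lemma:E_n}'' is exactly right.
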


\begin{proof}
When considering the elements of $\E_n$, it will be convenient to order them backwards:
$$ \E_n=\E\cap\{1,\ldots, n\} = \left\{Y_1^{(n)} > Y_2^{(n)} > \ldots > Y_{|\E_n|}^{(n)}=1\right\}. $$
For each $n\ge1$, let $a_n\egdef \ind{A}(n)$. 
We are going to prove the result in the form
\begin{equation}
 \label{eq:uri2}
\dfrac{1}{|\E_n|}\sum_{j\in \E_n}a_j \tend[a.s.]{n}{\infty} \alpha.
\end{equation}
We split the above average as 
\begin{equation}
\label{eq:split}
 \dfrac{1}{|\E_n|} \sum_{j\in \E_n}a_j 
= \dfrac{1}{|\E_n|}\sum_{i=1}^{|\E_n|} \left(a_{Y_i^{(n)}} - K_i^n\right) +\dfrac{1}{|\E_n|} \sum_{i=1}^{|\E_n|} K_i^n, 
\end{equation}
where 
$$ K_i^n \egdef \begin{cases}
                 \espc{a_{Y_i^{(n)}}}{Y_{i-1}^{(n)},\ldots,Y_1^{(n)}} & (2\le i\le |\E_n|),\\
		 \esp{a_{Y_1^{(n)}}} & (i=1).
                \end{cases}
$$
We first deal with the second term of~\eqref{eq:split}.
By Properties~(U) and~(I), $Y_1^{(n)}$ is uniformly distributed in $\{1,\ldots, n\}$, 
and conditionally to $\left(Y_{i-1}^{(n)},\ldots,Y_1^{(n)}\right)$, $Y_{i}^{(n)}$ is uniformly distributed 
in $\left\{1,\ldots,Y_{i-1}^{(n)}-1\right\}$, as long as $Y_{i-1}^{(n)}>1$.
Hence,
$$ 
K_1^n = \dfrac{1}{n} \sum_{j=1}^n a_j  \quad  \text{ and }\quad 
K_i^n = \dfrac{1}{Y_{i-1}^{(n)}-1} \sum_{j=1}^{Y_{i-1}^{(n)}-1} a_j,\quad (2\le i\le |\E_n|).
$$
By hypothesis, as soon as $Y_{i-1}^{(n)}$ is large, $K_i^n$ is close to $\alpha$. 
For any fixed $\varepsilon >0$, the number of $i$'s such that $|K_i^n-\alpha|>\varepsilon$ is bounded independently of $n$. 
Since $|\E_n|\to\infty$ a.s., it follows that 
$$\dfrac{1}{|\E_n|}\sum_{i=1}^{|\E_n|} K_i^n\tend[a.s.]{n}{\infty} \alpha\, . $$
Let us now turn to the first term of~\eqref{eq:split}.
Define for any $n\ge1$ and $i\ge1$
$$ A_i^n \egdef \begin{cases}
                 a_{Y_i^{(n)}} - K_i^n & (1\le i\le |\E_n|),\\
		 0 & (i> |\E_n|).
                \end{cases}
$$
In order to prove~\eqref{eq:uri2}, it remains to show that 
\begin{equation}
\label{eq:A}\dfrac{1}{|\E_n|}\sum_{i=1}^{|\E_n|} A_i^n \tend[a.s.]{n}{\infty} 0.
\end{equation}
Using a standard method, we first prove that~\eqref{eq:A} holds along a subsequence, then we control the oscillations to conclude.
Consider $n_m\egdef\lfloor\exp(m^2)\rfloor$.
By lemma~\ref{lemma:E_n}, convergence along the subsequence $(n_m)$ amounts to 
$$
A(n_m)\egdef\dfrac{1}{\lfloor\ln n_m\rfloor}\sum_{i=1}^{\lfloor\ln n_m\rfloor} A_i^{n_m} \tend[a.s.]{m}{\infty} 0\, .
$$
Observe now that the variance of $A_i^{n_m}$ is bounded, 
and that for $i\not= j$, $\esp{A_i^{n_m}A_j^{n_m}}=0$. 
Therefore, the variance of $A(n_m)$ is of order $(\ln n_m)^{-1}=1/m^2$.
By Tchebychev's inequality, 
$$
\sum_{m\ge1} \PP \left( A(n_m)>m^{-1/4} \right) 
\le \sum_{m\ge1} \frac{\Var(A(n_m))}{m^{-1/2}} 
= \sum_{m\ge1} m^{1/2}O\left(\frac{1}{m^2}\right) 
<\infty.$$
Hence, by Borel-Cantelli, with probability 1 we have $A(n_m)\le m^{-1/4}$ for $m$ large enough.
This proves that 
$$
\dfrac{1}{|\E_{n_m}|}\sum_{i=1}^{|\E_{n_m}|} A_i^{n_m} \tend[a.s.]{m}{\infty} 0.
$$

Consider now an integer $n\in]n_m, n_{m+1}[$. We can write
\begin{multline}
\label{eq:nmn}
\dfrac{1}{|\E_n|}\sum_{i=1}^{|\E_n|} A_i^n = 
\dfrac{1}{|\E_n|} \left( \sum_{i=1}^{|\E_n|} A_i^n - \sum_{i=1}^{|\E_{n_m}|}A_i^{n_m}\right)\\
+ \left\{ \dfrac{1}{|\E_{n}|} - \dfrac{1}{|\E_{n_m}|}  \right\} \sum_{i=1}^{|\E_{n_m}|}A_i^{n_m}
+  \dfrac{1}{|\E_{n_m}|}\sum_{i=1}^{|\E_{n_m}|} A_i^{n_m}
\end{multline}
Since $A_i^{n_m}$ is bounded and $|\E_{n_m}|-|\E_{n}| = o(|\E_{n}|)$, the second term on the RHS vanishes as $m\to\infty$.
Moreover, $\E_{n_m}\subset \E_n$. Therefore each $A_i^{n_m}$ in the first term (except $A_1^{n_m}$ which has a slightly different definition) is annihilated by some $A_j^n$, and the first term of~\eqref{eq:nmn} reduces to 
$$\dfrac{1}{|\E_n|}  \sum_{i=1}^{|\E_n|-|\E_{n_m}|+1} A_i^n - \dfrac{1}{|\E_n|} A_1^{n_m} ,$$
which goes to zero as $m\to\infty$. Since we already know that the third term goes to zero, this proves~\eqref{eq:A}.
\end{proof}

\subsection{Benford's law}
\label{sec:Benford}
We say that a sequence of positive real numbers $(x_n)$ follows Benford's law if for all $1\le t<10$
$$ \dfrac{1}{n}\sum_{j=1}^n \ind{\mant(x_j)<t} \tend{n}{\infty} \log_{10} t. $$
\begin{remark}
 \label{rem:inverse}
We recall that this is equivalent to the uniform distribution  $\mod 1$ of the sequence $(\log_{10} x_k)$ (see e.g. \cite{diaconis}). Therefore, this is also equivalent to the fact that the sequence $(1/x_k)$ follows Benford's law.
\end{remark}

The following theorem shows that the elements of the URI-set $\E$ almost surely follow Benford's law.

\begin{theo}
\label{thm:benford}
For any $1\le t\le 10$, the URI-density of $\{n\ge1:\ \mant(n)<t\}$ is $\log_{10} t$.
\end{theo}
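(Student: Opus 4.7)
The plan is to exploit the fact that the indicators $\ind{j\in\E}$ are independent Bernoulli random variables with parameters $1/j$, so that summing against them automatically introduces logarithmic weights. Working with the equivalent formulation $\frac{1}{|\E_n|}\sum_{j\in\E_n}\ind{\mant(j)<t}$, this reduces the question to the (classical and well-defined) logarithmic density of $A\egdef\{j:\mant(j)<t\}$, thereby circumventing the non-existence of the natural density that prevents a direct appeal to Theorem~\ref{thm:cesaro}.

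First, I would apply Theorem~\ref{thm:Petrov} in exactly the form used to prove Lemma~\ref{lemma:E_n}, but now to the independent, centered random variables
$$Z_j \egdef \ind{j\in\E}\ind{\mant(j)<t} - \frac{\ind{\mant(j)<t}}{j}.$$
Since $\esp{Z_j^2}\le 1/j$, the series $\sum_j \esp{Z_j^2}/(\ln j)^2$ converges, and Petrov's theorem (with $t_n=\ln n$, $p=2$) gives
$$\frac{1}{\ln n}\sum_{j\in\E_n}\ind{\mant(j)<t}\ -\ \frac{1}{\ln n}\sum_{j=1}^n\frac{\ind{\mant(j)<t}}{j} \tend[a.s.]{n}{\infty} 0.$$

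Next, a routine direct estimate identifies the limit of the deterministic second sum. Partitioning $A\cap[1,n]$ into blocks $[10^k,t\cdot 10^k)\cap\NN$ with $10^k\le n$, each block contributes $\ln t + O(10^{-k})$ to $\sum 1/j$, and there are $\log_{10} n + O(1)$ such blocks; summing yields
$$\sum_{j=1}^n\frac{\ind{\mant(j)<t}}{j} = \log_{10} n\cdot \ln t + O(1) = \log_{10} t\cdot \ln n + O(1),$$
which is the classical statement that Benford-type sets have log-density $\log_{10} t$.

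Finally, multiplying by $\ln n/|\E_n|$, which tends to $1$ almost surely by Lemma~\ref{lemma:E_n}, one obtains
$$\frac{1}{|\E_n|}\sum_{j\in\E_n}\ind{\mant(j)<t} \tend[a.s.]{n}{\infty} \log_{10} t,$$
which is the claim. There is no genuine obstacle here: the whole point is that the weights $\PP(j\in\E)=1/j$ convert URI-density into logarithmic density, and Benford-type sets are handled on the log-density side by an elementary block estimate. Note that this argument gives a self-contained proof of Theorem~\ref{thm:benford} without invoking the conditional-expectation/subsequence splitting of Theorem~\ref{thm:cesaro}, and it already foreshadows the equivalence between URI-density and log-density (Theorem~\ref{thm:log-density}).
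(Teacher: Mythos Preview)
Your proof is correct. It is, however, a different route from the one the paper chooses for Theorem~\ref{thm:benford}. The paper explicitly acknowledges your route---it remarks that the theorem follows from Duncan's result plus the URI/log-density equivalence of Theorem~\ref{thm:log-density}---but then deliberately gives a different ``direct'' proof via a coupling with a Poisson process of intensity $1/x$ on $\RR_+^*$: it realizes $\E$ as the set of integers $n$ with a Poisson point in $]n-1,n]$, shows via the Markov chain of mantissae (Section~\ref{sec:Markov}) and the ergodic theorem that the ordered Poisson points $X_k$ satisfy Benford's law, and then uses Borel--Cantelli to transfer this to the $N_k$.

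What each approach buys: your argument is shorter and self-contained (Petrov plus a block estimate), and as you note it is exactly the special case of Theorem~\ref{thm:log-density} applied to $A=\{j:\mant(j)<t\}$, so it renders Theorem~\ref{thm:benford} a corollary rather than an independent result. The paper's Poisson-coupling proof is longer here but pays off later: the same coupling and Markov-chain machinery are reused verbatim in the proofs of Theorem~\ref{thm:base} (independence of mantissae in multiplicatively independent bases) and Theorem~\ref{thm:single} (local URI-density), neither of which follows from your log-density reduction.
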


\begin{proof}
By Duncan's work~\cite{duncan}, this result can be viewed as a corollary of the equivalence between URI-density and log-density (see Theorem~\ref{thm:log-density} below). However we provide a direct proof of it, in which useful ideas will be presented.

It is convenient to consider a coupling of the URI-set $\E$ and its continuous analog defined as follows:
Let $\xi$ be a Poisson process on $\RR_+^*$ with intensity $1/x$. It can be viewed as a random set of points.
For any interval $I$, let $\xi_I$ denote the number of points in $I\cap \xi$: $\xi_I$ is Poisson distributed with parameter $\int_I \frac{dx}{x}$.
{F}rom $\xi$, define the random set $\E$ as the set of integers $n\ge1$ such that $\xi_{]n-1, n]}\ge1$. 
Since the random variables $(\xi_{]n-1, n]})_{n\ge1}$ are independent and 
$$
\PP(n\in \E) = 1-\PP\left(\xi_{]n-1, n]}=0 \right) = \frac{1}{n}, 
$$
$\E$ is a URI-set.

The process $\xi$ satisfies a property analogous to Property~(U): 
For any $a>0$, the largest point of $[0, a]\cap\xi$ is uniformly distributed in $[0, a]$. 
Indeed, for any $t<a$
$$
\PP\Bigl( \max ([0, a]\cap\xi) \le t \Bigr)=\PP\left( \xi_{[t,a]} = 0 \right) = \frac{t}{a}.
$$
Let us order backwards the points of $\xi\cap [0,1]$: $1>Y_1>Y_2>\ldots $. 
Conditionally to $Y_k$, $Y_{k+1}$ is uniformly distributed in $[0, Y_k]$. 
By Proposition~3.1 of~\cite{janvresse}, the mantissae $(\mant(Y_k))$ constitute a Markov chain whose unique invariant distribution is $\Pb$. Moreover, $Y_1$ being uniformly distributed in $[0,1]$, the distribution of $\mant(Y_1)$ is the normalized Lebesgue measure on $[1,10[$, hence is equivalent to $\Pb$. By the pointwise ergodic theorem, we have for any $1\le t<10$
$$
\frac{1}{n}\sum_{k=1}^n \ind{\mant(Y_k)<t} \tend[a.s.]{n}{\infty} \log_{10} t.
$$
Consider now the points of $\xi\cap ]1, +\infty[$: $X_1<X_2<\ldots$
They follow the same distribution as $(1/Y_1, 1/Y_2, \ldots)$. 
By Remark~\ref{rem:inverse}, we deduce that 
\begin{equation}
 \label{eq:Benford for X}
\frac{1}{n}\sum_{k=1}^n \ind{\mant(X_k)<t} \tend[a.s.]{n}{\infty} \log_{10} t.
\end{equation}
Now observe that 
$$
\sum_{n\ge1}\PP (\xi_{]n-1, n]}\ge 2)=
\sum_{n\ge1}\left(\dfrac{1}{n}+\dfrac{n-1}{n}\log\dfrac{n-1}{n}\right)
< \infty,
$$
hence by Borel-Cantelli, with probability one there is only a finite number of $n$'s such that $\xi_{]n-1, n]}\ge 2$. Coming back to the URI-set $\E=\{N_1=1<N_2<\cdots\}$, this implies the almost-sure existence of $R$ such that, for all large enough $k$, 
$ 0\le N_{k}-X_{k+R}<1$. Since $N_k\to\infty$ a.s., we have $|\mant(N_k)-\mant(X_{k+R})|\to 0$ unless $N_k$ be of the form $10^p$ (which, again by Borel-Cantelli, happens almost surely for only finitely many $k$'s). It follows from~\eqref{eq:Benford for X} that
$$\frac{1}{n}\sum_{k=1}^n \ind{\mant(N_k)<t} \tend[a.s.]{n}{\infty} \log_{10} t.$$
\end{proof}

\subsection{Equivalence with log-density}
\label{sec:log}
To deal with the problem of non-existence of natural densities, several alternative densities have been introduced. Flehinger's theorem amounts to considering the so-called \emph{$H^\infty$-density}, obtained by iteration of Cesaro averages: A subset $A$ of $\ZZ_+$ has \emph{$H^\infty$-density} $\alpha$ if
$$
\lim_{k\to\infty}\limsup_{n\to\infty} P_n^k = \lim_{k\to\infty}\liminf_{n\to\infty} P_n^k = \alpha,
$$
where the $P_N^k$'s are inductively defined by $P_n^0\egdef\ind{A}(n)$ and $P_n^{k+1}\egdef (1/n) \sum_{j=1}^n P_j^k$. Obviously, $H^\infty$-density is \emph{stronger} than natural density, in the sense used by Diaconis in~\cite{diaconis_thesis}: If $A$ has natural density $\alpha$, then $A$ has $H^\infty$-density $\alpha$. The example of the set $A$ of integer whose initial digit (when written in base 10) is 1 shows that $H^\infty$-density is \emph{strictly} stronger than natural density.

Still stronger than $H^\infty$-density is the notion of \emph{log-density}.
Recall that $A\subset\ZZ_+$ has \emph{log-density} $\alpha$ if
$$\dfrac{1}{\ln n}\sum_{j=1}^n \dfrac{1}{j} \ind{A}(j) \tend{n}{\infty} \alpha.$$
A proof that log-density is stronger than $H^\infty$-density can be found in~\cite{diaconis}, together with an example of a set $A$ with a log-density, but for which the $H^\infty$-density fails to exist.

\begin{theo}
\label{thm:log-density}
 Let $A\subset\ZZ_+$. 
$$\dfrac{1}{|\E_n|}\sum_{j=1}^n \ind{A}(j) \ind{\E}(j) - \dfrac{1}{\ln n}\sum_{j=1}^n \ind{A}(j) /j \tend[a.s.]{n}{\infty} 0\, .$$
In particular, $A$ has URI-density $\alpha$  if and only if $A$ has log-density $\alpha$.
\end{theo}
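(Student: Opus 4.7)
The plan is to imitate the proof of Lemma~\ref{lemma:E_n}, of which the announced displayed convergence is essentially a weighted version: in that lemma one handled the case $A=\ZZ_+$, and the present statement asks for the same with an extra indicator $\ind{A}(j)$ in each sum. The natural move is to apply Petrov's theorem (Theorem~\ref{thm:Petrov}) to the centered independent random variables
$$ Z_n \egdef \ind{A}(n)\bigl(\ind{\E}(n)-1/n\bigr). $$
Independence comes from the fact that the events $\{n\in\E\}$ are independent; $\esp{Z_n}=0$; and
$$ \esp{Z_n^2} = \ind{A}(n)\cdot\dfrac{1}{n}\left(1-\dfrac{1}{n}\right) \le \dfrac{1}{n}. $$
Taking $t_n=\ln n$, the series $\sum_{n\ge 2}\esp{Z_n^2}/(\ln n)^2$ converges, so Theorem~\ref{thm:Petrov} gives
$$ \dfrac{1}{\ln n}\sum_{j=1}^n \ind{A}(j)\ind{\E}(j) - \dfrac{1}{\ln n}\sum_{j=1}^n \dfrac{\ind{A}(j)}{j} \tend[a.s.]{n}{\infty} 0. $$

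Once this is in hand, the only remaining task is to replace the normalization $1/\ln n$ by $1/|\E_n|$. I would write
$$ \dfrac{1}{|\E_n|}\sum_{j=1}^n \ind{A}(j)\ind{\E}(j) - \dfrac{1}{\ln n}\sum_{j=1}^n \ind{A}(j)\ind{\E}(j) = \left(\dfrac{\ln n}{|\E_n|}-1\right)\dfrac{1}{\ln n}\sum_{j=1}^n \ind{A}(j)\ind{\E}(j). $$
The average on the right is bounded by $|\E_n|/\ln n$, which by Lemma~\ref{lemma:E_n} converges almost surely to $1$, while the factor $\ln n/|\E_n|-1$ tends a.s.\ to $0$ by the same lemma. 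Adding this negligible term to the previous convergence yields the displayed identity of the theorem.

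For the ``in particular'' part, note that the quantity $\frac{1}{\ln n}\sum_{j=1}^n \ind{A}(j)/j$ is deterministic. If $A$ has log-density $\alpha$, the displayed limit immediately forces the URI-density to exist and equal $\alpha$ almost surely. Conversely, if $A$ has URI-density $\alpha$, then the first term in the display converges a.s.\ to $\alpha$, so the deterministic second term must also converge to $\alpha$, i.e.\ $A$ has log-density $\alpha$.

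I do not expect a real obstacle: the argument is a direct weighted variant of Lemma~\ref{lemma:E_n}. The only point requiring a little care is the variance bound, where one must use the factor $\ind{A}(j)$ only to discard terms (not to improve the bound) so that the summability condition $\sum \esp{Z_n^2}/(\ln n)^2<\infty$ is checked uniformly in $A$, which is why the argument works for every subset $A\subset\ZZ_+$ with the same exceptional null set.
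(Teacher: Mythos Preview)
Your argument is correct and is essentially the same as the paper's: define $Z_n=\ind{A}(n)(\ind{\E}(n)-1/n)$, apply Theorem~\ref{thm:Petrov} with $t_n=\ln n$, and then switch normalizations via Lemma~\ref{lemma:E_n}. One small caveat: your closing remark that the argument works ``with the same exceptional null set'' for every $A$ is an overclaim---the null set produced by Theorem~\ref{thm:Petrov} depends on the sequence $(Z_n)$ and hence on $A$---but this is irrelevant to the theorem, which concerns a fixed $A$.
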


\begin{proof}
We apply Theorem~\ref{thm:Petrov}: 
We consider the independent random variables $Z_n\egdef \ind{A}(n)(\ind{\E}(n)-1/n)$ which are centered. Since 
$$
\sum_{n\ge 1} \dfrac{\esp{Z_n^2}}{(\ln n)^2} \le  \sum_{n\ge 1} \left(1-\dfrac{1}{n}\right)\dfrac{1}{n(\ln n)^2} <\infty ,
$$
we get 
\begin{equation}
\label{eq:lnNbis}
 \dfrac{\sum_{j=1}^n Z_j}{\ln n} = \dfrac{1}{\ln n}\sum_{j=1}^n \ind{A}(j) \ind{\E}(j) - \dfrac{1}{\ln n}\sum_{j=1}^n \ind{A}(j) /j \tend[a.s.]{n}{\infty} 0\, .
\end{equation}
Since, by Lemma~\ref{lemma:E_n}
$$ \dfrac{|\E_n|}{\ln n}\tend[a.s.]{n}{\infty} 1, $$
we can conclude the proof of Theorem~\ref{thm:log-density}.
\end{proof}

\section{Independence of mantissae in different bases}
\label{sec:independent bases}
All the previous results concerned numbers written in the base-10 numeral system, but extend straightforwardlly to any integer base $b$. 
We denote by $\mant_b(x)\in[1, b[$ the \emph{mantissa in base $b$} of a positive real number $x$ and by $\Pb_b$ the probability distribution over $[1, b[$ defined by $\Pb_b([1, t]) \egdef \log_b t$ for all $1\le t <b$.

The purpose of this section is to prove the following theorem, which states that the mantissae in different bases of the elements of the URI-set $\E$ are independent, under some algebraic condition on the bases.

\begin{theo}
 \label{thm:base}
Let $(b_i)_{1\le i\le \ell}$ be positive integers, satisfying
\begin{equation}
 \label{eq:algebraic independence of logarithms}
 \forall a_1,\ldots,a_\ell\in\ZZ, \Bigl[\frac{a_1}{\ln b_1} +\cdots+\frac{a_\ell}{\ln b_\ell}=0\Bigr]\Longrightarrow
a_1=\cdots=a_\ell=0.
\end{equation}
Then 
for any $1\le t_i < b_i$ ($1\le i\le \ell$), 
$\left\{n\in\ZZ_+: \mant_{b_i}(n) \le t_i \ \forall 1\le i\le \ell \right\}$ has URI-density equal to 
$\prod_{i=1}^\ell\log_{b_{i}}t_i$.
\end{theo}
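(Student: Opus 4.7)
The plan is to invoke Theorem~\ref{thm:log-density} to reduce the URI-density statement to a deterministic log-density computation: it suffices to prove that, for $A\egdef\{n\ge1 : \mant_{b_i}(n)\le t_i \text{ for each } 1\le i\le\ell\}$,
\[ \frac{1}{\ln n}\sum_{j=1}^n \frac{\ind{A}(j)}{j} \tend{n}{\infty} \prod_{i=1}^\ell \log_{b_i} t_i. \]

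The first step is to pass from the discrete log-sum to a continuous integral. Since $\bigl|1/j - \ln((j+1)/j)\bigr|=O(1/j^2)$ is summable and the integrand is bounded, the above sum is asymptotically equivalent to $(\ln n)^{-1}\int_1^n \ind{A}(\lfloor x\rfloor)\,dx/x$. Replacing $\ind{A}(\lfloor x\rfloor)$ by its continuous analogue $\prod_i \ind{\{\log_{b_i} x\}\le \log_{b_i} t_i}$ introduces a discrepancy only near the integers $j$ for which some $\mant_{b_i}(j)$ lies within $O(1/j)$ of $t_i$, a set of logarithmic measure $o(\ln n)$. The change of variables $u=\ln x$ then converts the problem into the time average
\[ \frac{1}{\ln n}\int_0^{\ln n} \prod_{i=1}^\ell \ind{\{u/\ln b_i\}\le \log_{b_i} t_i}\, du \]
of the characteristic function of a box along the linear flow $u\mapsto (u/\ln b_1,\ldots,u/\ln b_\ell)$ on the torus $(\RR/\ZZ)^\ell$.

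At this point I would invoke the continuous Kronecker--Weyl equidistribution theorem: the orbit $\{u\alpha \bmod 1 : u\ge 0\}$ of a vector $\alpha=(\alpha_1,\ldots,\alpha_\ell)\in\RR^\ell$ is uniformly distributed on $(\RR/\ZZ)^\ell$ with respect to Lebesgue measure if and only if no nontrivial integer combination $a_1\alpha_1+\cdots+a_\ell\alpha_\ell$ vanishes. The proof is routine Fourier analysis: for each nonzero integer vector $(a_i)$, the Cesaro mean of $\exp\bigl(2\pi i u\sum_i a_i\alpha_i\bigr)$ tends to zero exactly under this condition. Setting $\alpha_i=1/\ln b_i$, the hypothesis~\eqref{eq:algebraic independence of logarithms} is precisely what is required, and the limit of the time average is the Lebesgue measure of the target box $\prod_i[0,\log_{b_i} t_i]$, which equals $\prod_i \log_{b_i} t_i$.

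The main obstacle is that Kronecker--Weyl is usually stated for continuous test functions, whereas the target here is the indicator of a closed box with nonempty boundary. The standard remedy is a sandwich: approximate this indicator from above and below by continuous functions, apply equidistribution to each, and let the approximations tighten, using regularity of Lebesgue measure to close the gap. A more hands-on alternative, closer in spirit to Section~\ref{sec:Benford}, would be to reuse the Poisson-process coupling of Theorem~\ref{thm:benford} and verify by Fourier analysis that the Markov chain of joint mantissae $\bigl(\mant_{b_1}(Y_k),\ldots,\mant_{b_\ell}(Y_k)\bigr)$ admits $\prod_i \Pb_{b_i}$ as its unique invariant distribution, condition~\eqref{eq:algebraic independence of logarithms} being precisely what kills every nontrivial character.
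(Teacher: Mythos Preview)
Your argument is correct, but it takes a genuinely different route from the paper's. You reduce to log-density via Theorem~\ref{thm:log-density} and then prove the deterministic statement that the continuous flow $u\mapsto(u/\ln b_1,\ldots,u/\ln b_\ell)$ is equidistributed on $(\RR/\ZZ)^\ell$, which is exactly Kronecker--Weyl under hypothesis~\eqref{eq:algebraic independence of logarithms}. The paper instead works directly with the Poisson-process coupling from the proof of Theorem~\ref{thm:benford}: writing $Y_k=U_1\cdots U_k$ with $(U_k)$ i.i.d.\ uniform on $[0,1]$, it studies the random walk $Z_1+\cdots+Z_k$ on $(\RR/\ZZ)^\ell$ with steps $Z_k=(\log_{b_1}U_k,\ldots,\log_{b_\ell}U_k)\bmod 1$, and shows via the Fourier computation $\widehat\nu(m)=1/(1-2\pi i\theta)$ with $\theta=\sum m_i/\ln b_i$ that Lebesgue is the unique $\nu$-stationary measure; Lemma~\ref{lemma:random walk} (an ergodic-theorem argument) then gives equidistribution of the walk, and the transfer from $(X_k)$ to $(N_k)$ goes as in Theorem~\ref{thm:benford}. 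In short, your ``alternative'' sketch in the last paragraph is essentially what the paper does.

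What each approach buys: yours is shorter and fully deterministic once Theorem~\ref{thm:log-density} is in hand, and the continuous Kronecker--Weyl theorem handles the indicator-of-a-box test function after a routine sandwich. The paper's approach stays inside the probabilistic framework and actually proves slightly more than the URI-density statement---namely that the sequence $\bigl(\log_{b_1}N_k,\ldots,\log_{b_\ell}N_k\bigr)\bmod 1$ is almost surely equidistributed---without passing through the log-density equivalence. Your discrete-to-continuous passage (replacing $\ind{A}(\lfloor x\rfloor)$ by $\prod_i\ind{\{\log_{b_i}x\}\le\log_{b_i}t_i}$) is correct but deserves one more sentence: the discrepancy set consists of those $x$ for which some $\mant_{b_i}(x)$ lies within $O(b_i/x)$ of $t_i$ or of $1$, and the $dx/x$-measure of this set on $[1,n]$ is indeed $O(1)=o(\ln n)$.
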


Recall that the positive integers $(b_i)_{1\le i\le \ell}$ are said to be \emph{multiplicatively independent} if $b_1^{s_1}\dots b_\ell^{s_\ell} =1$ where $(s_i)_{1\le i\le \ell} \subset \ZZ$ implies that $s_i=0$ for all $i$.
Note that, in the case $\ell=2$, property~\eqref{eq:algebraic independence of logarithms} exactly means that $b_1$ and $b_2$ are multiplicatively independent. To our knowledge, it is unknown whether, in the general case, multiplicative independence of $b_1,\ldots,b_\ell$ implies property~\eqref{eq:algebraic independence of logarithms}. This question is related to the so-called \emph{Schanuel's conjecture} in transcendental number theory (see \cite{Lang}, p.~30-31 or \cite{Waldschmidt}).

\begin{lemma}
 \label{lemma:random walk}
Let $(Z_k)_{k\ge1}$ be i.i.d. random variables taking values in $(\RR/\ZZ)^\ell$ with common distribution $\nu$. Assume that the only probability distribution $\mu$ such that $\mu\ast\nu=\mu$ is the Lebesgue measure on $(\RR/\ZZ)^\ell$. Then the random walk $(P_k)_{k\ge1}\egdef(Z_1+\cdots+Z_k)_{k\ge1}$ is uniformly distributed on $(\RR/\ZZ)^\ell$. In other words, it satisfies:
For all cylinder $C=[u_1,v_1]\times\cdots\times[u_\ell,v_\ell]$ where $0\le u_i<v_i<1$, 
\begin{equation}
 \label{eq:uniform distribution of random walk}
\dfrac{1}{n}\sum_{k=1}^n \ind{C}(P_k) \tend[a.s.]{n}{\infty} \prod_{i=1}^\ell (v_i-u_i). 
\end{equation}
\end{lemma}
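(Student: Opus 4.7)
The plan is to reduce~\eqref{eq:uniform distribution of random walk} to Weyl's equidistribution criterion: a sequence $(p_k)$ in $(\RR/\ZZ)^\ell$ is uniformly distributed if and only if for every $m\in\ZZ^\ell\setminus\{0\}$, $\frac{1}{n}\sum_{k=1}^n e^{2\pi i\langle m,p_k\rangle}\tend{n}{\infty}0$. Since $\ZZ^\ell\setminus\{0\}$ is countable, it suffices to show that for each fixed $m\ne0$,
\begin{equation}\label{eq:plan-weyl}
\frac{1}{n}\sum_{k=1}^n e^{2\pi i\langle m,P_k\rangle}\tend[a.s.]{n}{\infty}0.
\end{equation}
I would first rephrase the hypothesis in Fourier terms. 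Taking Fourier coefficients of $\mu\ast\nu=\mu$ yields $\hat\mu(m)(\hat\nu(m)-1)=0$ for every $m\in\ZZ^\ell$. If some $m_0\ne0$ satisfied $\hat\nu(m_0)=1$, then $\langle m_0,Z_1\rangle\in\ZZ$ almost surely, so $\nu$ would be supported on the proper closed subgroup $H\egdef\{x:\langle m_0,x\rangle\in\ZZ\}$; the normalized Haar measure on $H$ would then be $\nu$-stationary but distinct from Lebesgue, a contradiction. Hence $\hat\nu(m)\ne1$ for every $m\ne0$.

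Fix such an $m$ and set $c\egdef\hat\nu(m)$, $W_k\egdef e^{2\pi i\langle m,Z_k\rangle}$, $T_k\egdef\prod_{j=1}^k W_j=e^{2\pi i\langle m,P_k\rangle}$ and $S_n\egdef\sum_{k=1}^n T_k$. I would split according to whether $|c|=1$ or $|c|<1$. In the first case, the equality case in $|\EE[W_1]|\le\EE[|W_1|]=1$ forces $W_1=c$ almost surely, so that $T_k=c^k$ is deterministic and, since $c\ne1$, $|S_n|\le 2/|1-c|$, yielding~\eqref{eq:plan-weyl}. In the second case, using $|W_j|=1$ together with independence I get $\EE[T_k\overline{T_{k'}}]=\overline{c}^{\,k'-k}$ for $k\le k'$, whence
\[
\EE\bigl[|S_n|^2\bigr]=n+2\,\mathrm{Re}\sum_{d=1}^{n-1}(n-d)\,c^d=O(n).
\]
Tchebychev's inequality then gives $\PP(|S_n|/n>\varepsilon)=O(1/n)$, which is summable along $n_j\egdef j^2$; Borel--Cantelli produces $S_{n_j}/n_j\tend[a.s.]{j}{\infty}0$, and the deterministic bound $|S_n-S_{n_j}|\le n_{j+1}-n_j=O(\sqrt{n_j})$ for $n_j\le n<n_{j+1}$ transfers the convergence to the full sequence, in the same spirit as the oscillation control at the end of the proof of Theorem~\ref{thm:cesaro}.

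The only step with any real content is the Fourier reformulation of the hypothesis; Weyl's criterion and the second-moment/subsequence argument are routine. The subtlety I expect to be most careful with is the boundary case $|c|=1$ with $c\ne1$, which does occur (for instance when $Z_1$ almost surely equals a vector with an irrational coordinate): one must observe that $W_1=c$ is then forced almost surely, so the partial sums $T_k$ become a purely deterministic geometric-like sequence whose Cesàro averages vanish elementarily, rather than trying to push the $L^2$ bound through a regime where it degenerates.
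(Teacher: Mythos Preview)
Your argument is correct, and it follows a genuinely different route from the paper's. The paper does not touch Weyl's criterion at all: instead it randomizes the starting point by an independent uniform $M_0$, observes that $(M_0+P_k)_k$ is a stationary process whose invariant measure (Lebesgue) is unique by hypothesis, hence ergodic, applies Birkhoff's ergodic theorem to get $\frac{1}{n}\sum_k\ind{C}(M_0+P_k)\to\mu(C)$ a.s., and then picks a single good value $m_0$ of $M_0$ via Fubini to transfer the conclusion to the unshifted walk $(P_k)$.

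Your Fourier-analytic proof is more elementary in that it avoids both Birkhoff's theorem and the (cited) Markov-chain fact that uniqueness of the stationary law implies ergodicity; it also yields an explicit $O(n)$ second-moment bound. The paper's approach, on the other hand, is more structural and would carry over verbatim to random walks on any compact metrizable group, whereas your route leans on the character theory of the torus. Both are perfectly valid; your handling of the edge case $|c|=1$, $c\ne1$ (where the $L^2$ bound degenerates to $O(n^2)$) is exactly the right patch.
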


\begin{proof}
 Denote by $\mu$ the Lebesgue measure on $(\RR/\ZZ)^\ell$. 
Let $M_0$ be a random variable with law $\mu$, independent of $(Z_k)_{k\ge1}$. Setting 
$$ M_k\egdef M_0+Z_1+\cdots+Z_k = M_0+P_k, $$
we get a stationary random walk $(M_k)_{k\ge0}$. Since $\mu$ is the unique invariant measure under convolution by $\nu$, the stationary process $(M_k)_{k\ge0}$ is ergodic, and by Birkhoff ergodic theorem, we get that for all cylinder $C=[u_1,v_1]\times\cdots\times[u_\ell,v_\ell]$ where $0\le u_i<v_i<1$, 
$$ \dfrac{1}{n}\sum_{k=1}^n \ind{C}(M_k) \tend[a.s.]{n}{\infty} \prod_{i=1}^\ell (v_i-u_i).$$
(See \emph{e.g.} \cite{Hernandez-Lasserre}, Corollary~2.5.2 page 38.)
Therefore, we can find some $m_0\in(\RR/\ZZ)^\ell$ such that, with probability~1, for all cylinder $C=[u_1,v_1]\times\cdots\times[u_\ell,v_\ell]$ where $0\le u_i<v_i<1$ are rational numbers,
$$ \dfrac{1}{n}\sum_{k=1}^n \ind{C}(m_0+P_k) \tend{n}{\infty} \prod_{i=1}^\ell (v_i-u_i).$$
We thus obtain that, for all cylinder $C$ with rational endpoints,
$$ \dfrac{1}{n}\sum_{k=1}^n \ind{C-m_0}(P_k) \tend[a.s.]{n}{\infty} \prod_{i=1}^\ell (v_i-u_i)=\mu(C-m_0).$$
By density of the rationals, \eqref{eq:uniform distribution of random walk} is satisfied for any cylinder $C$.
\end{proof}

\begin{proof}[Proof of Theorem~\ref{thm:base}]
 As in the proof of Theorem~\ref{thm:benford}, we consider the coupling of the URI-set with the Poisson process $\xi$, and we denote by $\cdots>X_2>X_1>1>Y_1>Y_2>\cdots$ the points of $\xi$. Define, for all $k\ge1$, $U_k\egdef Y_k/Y_{k-1}$ (where $Y_0\egdef 1$): Then $(U_k)_{k\ge1}$ is a sequence of i.i.d. uniform random variables in $[0,1]$, and $Y_k=U_1U_2\dots U_k$. 

Set $Z_k\egdef \Bigl( \log_{b_1}(U_k)\mod 1,\ldots, \log_{b_\ell}(U_k)\mod 1 \Bigr)\in(\RR/\ZZ)^\ell$, and let $\nu$ be the common law of the $Z_k$'s. We claim that the only probability measure $\mu$ which is invariant under convolution by $\nu$ is the Lebesgue measure on $(\RR/\ZZ)^\ell$. Indeed, for such an invariant measure,
the Fourier coefficients must satisfy
$$ \forall (m_1,\ldots,m_\ell)\in\ZZ^\ell,\quad \widehat\mu(m_1,\ldots,m_\ell) = \widehat\mu(m_1,\ldots,m_\ell) \widehat\nu(m_1,\ldots,m_\ell). $$
We just have to check that $\widehat\nu(m_1,\ldots,m_\ell)\neq1$ when $(m_1,\ldots,m_\ell)\neq(0,\ldots,0)$.
\begin{align*}\widehat\nu(m_1,\ldots,m_\ell) 
 &= \int_{(\RR/\ZZ)^\ell} e^{-i2\pi(m_1t_1+\cdots+m_\ell t_\ell)}\, d\nu(t_1,\ldots, t_\ell)\\
 &= \int_{[0,1]} e^{-i2\pi \left(m_1\log_{b_1} u + \cdots + m_\ell \log_{b_\ell} u\right)}\, du \\
 &= \int_{[0,1]} e^{-i2\pi\theta\ln u}\, du,
\end{align*}
where $\theta\egdef \frac{m_1}{\ln b_1} + \cdots + \frac{m_\ell}{\ln b_\ell}\neq0$ for $(m_1,\ldots,m_\ell)\neq(0,\ldots,0)$ by~\eqref{eq:algebraic independence of logarithms}. Hence, 
$$ \widehat\nu(m_1,\ldots,m_\ell) = \dfrac{1}{1-i2\pi\theta} \neq1, $$
which proves the claim.

It follows by Lemma~\ref{lemma:random walk} that the sequence 
$$\Bigl( \log_{b_1}(Y_k)\mod 1,\ldots, \log_{b_\ell}(Y_k)\mod 1 \Bigr)$$ 
is uniformly distributed in $(\RR/\ZZ)^\ell$, and the same is true if we replace $Y_k$ by $X_k$. The end of the proof goes with similar arguments as for Theorem~\ref{thm:benford}.
\end{proof}

\subsection*{Cassels-Schmidt-Benford sequences}
Given two multiplicatively independent positive integers $b_1$ and $b_2$, Bodo Volkmann defined a \emph{Cassels-Schmidt number of type $(b_1,b_2)$} as a number which is normal in base $b_1$ but not in base $b_2$. By analogy, he also proposed to define a \emph{Cassels-Schmidt-Benford (CSB) sequence of type $(b_1,b_2)$} as a sequence of positive numbers $(x_k)$ which follows Benford's law with respect to base $b_1$ but not with respect to base $b_2$.

It turns out that it is far easier to find explicit examples of CSB sequences. Indeed, take $x_k\egdef (b_2)^k$, then $(x_k)$ does certainly not follow Benford's law with respect to base $b_2$. But since $\ln b_2/\ln b_1$ is not a rational number, the sequence $\left(\log_{b_1} x_k \mod 1\right)=\left(k \ln b_2/\ln b_1 \mod 1\right)$ is uniformly distributed in $[0,1]$. Hence $(x_k)$ follows Benford's law with respect to base $b_1$.

As an application of Theorem~\ref{thm:base} in the case $\ell=2$, we get that in almost every URI-set, we can find a CSB sequence $(x_k)$ of type $(b_1,b_2)$, such that the sequence $\left(\mant_{b_2}(x_k)\right)$ follow any probability distribution prescribed in advance on $[1,b_2[$.

\section{Local URI-density}
For the sake of simplicity, we now return to the classical base~10 numeration system (but obviously the following results also hold in any integer base).

\subsection{A single element of a URI-set satisfies Benford's law}
\label{sec:Benford-local}
According to Theorem~\ref{thm:log-density}, Theorem~\ref{thm:benford} turns out to be weaker than Flehinger's theorem. However similar ideas to those developed in the proof can lead to somewhat stronger results than Theorem~\ref{thm:benford}.

\begin{theo}
 \label{thm:single}
For all $1\le \alpha<\beta <10$
$$ \lim_{k\to\infty}\PP\Bigl(\mant(N_k)\in[\alpha, \beta ]\Bigr)=\Pb([\alpha, \beta ]). $$
\end{theo}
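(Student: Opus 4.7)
The plan is to build on the Poisson coupling introduced in the proof of Theorem~\ref{thm:benford} and push distributional convergence from $\mant(X_k)$ up to $\mant(N_k)$. Recall that $\xi$ is the Poisson process of intensity $1/x$ on $\RR_+^*$, with $1 > Y_1 > Y_2 > \ldots$ the points of $\xi \cap ]0,1]$ ordered backwards and $X_1 < X_2 < \ldots$ those of $\xi \cap ]1,+\infty[$ ordered forwards. My first step would be to show that $\mant(X_k) \to \Pb$ in total variation at exponential speed. Indeed, $(\mant(Y_k))_{k \ge 1}$ is exactly the Markov chain defined by~\eqref{eq:transition probability}, started from $\mant(Y_1)$ uniform on $[1,10[$, so~\eqref{eq:speed of convergence} yields $\bigl| \PP(\mant(Y_k) \in S) - \Pb(S) \bigr| \le (9/10)^k$ for every Borel $S \subset [1,10[$. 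Since $(X_k)$ has the same law as $(1/Y_k)$ and the involution $m \mapsto \mant(1/m) = 10/m$ on $]1,10[$ preserves $\Pb$ (a direct check on the density $1/(m \ln 10)$), the same estimate transfers to $\mant(X_k)$.

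Next, as in the proof of Theorem~\ref{thm:benford}, a Borel--Cantelli argument ensures that $T_\infty \egdef \sum_{n \ge 2} (\xi_{]n-1,n]} - 1)_+$ is almost surely finite, so that a.s.\ $N_k = \lceil X_{k-1+T_\infty} \rceil$ for all sufficiently large $k$; a second Borel--Cantelli estimate rules out $N_k$ being an exact power of $10$ infinitely often. Combined with $N_k \to \infty$ a.s., this gives $\bigl| \mant(N_k) - \mant(X_{k-1+T_\infty}) \bigr| \to 0$ almost surely.

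The main obstacle is then to convert this almost sure coupling into the desired distributional convergence despite the random shift $T_\infty$. Given $\varepsilon > 0$, I would choose $M$ with $\PP(T_\infty > M) < \varepsilon$ and decompose
\[
\PP\bigl( \mant(N_k) \in [\alpha,\beta] \bigr) = \sum_{r=0}^M \PP\bigl( \mant(X_{k-1+r}) \in [\alpha,\beta],\ T_\infty = r \bigr) + o_k(1) + O(\varepsilon).
\]
For each fixed $r \le M$ the crux is the asymptotic independence
\[
\PP\bigl( \mant(X_{k-1+r}) \in [\alpha,\beta],\ T_\infty = r \bigr) \xrightarrow[k \to \infty]{} \Pb([\alpha,\beta])\, \PP(T_\infty = r),
\]
which I would prove by splitting $\xi$ over the disjoint intervals $]0,a]$ and $]a,+\infty[$: for $a$ large the event $\{T_\infty = r\}$ is well-approximated by one measurable with respect to $\xi \cap ]0,a]$, while for $k$ large the point $X_{k-1+r}$ lies in $]a,+\infty[$, on which $\xi$ is an independent Poisson process of intensity $1/x$. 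Since multiplication by $a$ is a translation in log-scale that preserves $\Pb$, the first step applies verbatim to this tail process, and letting $a$ and then $M$ tend to infinity yields the conclusion.
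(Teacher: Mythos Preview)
Your proof follows the same Poisson-coupling strategy as the paper, but there is a gap in the asymptotic-independence step. You argue that for large $k$ the point $X_{k-1+r}$ lies in $]a,+\infty[$ and then invoke independence of the tail process $\xi\cap\,]a,+\infty[$ from $\xi\cap\,]0,a]$. However, $X_{k-1+r}$ is indexed globally: knowing that it falls beyond $a$ does not make it measurable with respect to $\xi\cap\,]a,+\infty[$ alone, since its position within the tail process is $k-1+r-\Xi_a$, where $\Xi_a\egdef|\xi\cap\,]1,a]|$ is certainly correlated with your approximating event $\{T_a=r\}$. To repair the argument you must also condition on $\Xi_a$ (or on the full restriction $\xi|_{]1,a]}$); only then does $X_{k-1+r}$ become a fixed-index point of the independent tail process to which the exponential bound $(9/10)^{k-1+r-\Xi_a}$ applies. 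This is easily patched (for instance $\EE\bigl[(10/9)^{\Xi_a}\bigr]=a^{1/9}$ is absorbed by $(9/10)^{k}$), but your sketch does not acknowledge the issue.

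The paper sidesteps this by conditioning not on $T_\infty$ but on $J_m\egdef|\E_m|$, the number of elements of $\E$ up to a fixed large integer $m$. On the event $A_m$ that there are no double Poisson points and no powers of $10$ beyond $m$, one has directly $N_k=\lceil X_{k-J_m}^m\rceil$, where $X_j^m$ denotes the $j$-th point of $\xi\cap\,]m,+\infty[$. Since $J_m$ is $\xi|_{]1,m]}$-measurable and $X_{k-j}^m$ depends only on $\xi|_{]m,+\infty[}$, independence is immediate and the tail index is deterministic once $J_m=j$ is fixed. The two approaches are the same idea---localize, condition to pin down the shift, then apply the uniform exponential convergence of the mantissa chain on the tail---but the paper's choice of conditioning variable makes the bookkeeping cleaner and avoids the extra layer you would need.
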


\begin{proof}
As in the proof of Theorem~\ref{thm:benford}, we consider the URI-set $\E$ constructed from the Poisson process $\xi$. For a fixed integer $m$, we number the points of $\xi\cap ]m, +\infty[$: 
$$ \xi\cap ]m, +\infty[ = \{X_1^m<X_2^m<\ldots\}. $$
We also set $X_0^m\egdef m$.
Observe that the process $\left(1/X_k^m\right)_{n\ge0}$ is again a Markov chain such that, given $\left(1/X_0^m,\ldots,1/X_k^m\right)$, $1/X_{k+1}^m$ is uniformly distributed in $]0,1/X_k^m[$. It follows by~\eqref{eq:speed of convergence} and Remark~\ref{rem:inverse} that, for any Borel set $S\subset[1,10[$,
\begin{equation}
 \label{eq:convergence of X_k^m}
\forall k\ge 1,\quad \left| \PP\Bigl( \mant \left(X_k^m\right) \in S\Bigr)-\Pb(S) \right| \le \left(\dfrac{9}{10}\right)^k.
\end{equation}
We now consider the event 
$$ A_m\egdef \bigcap_{n>m}\left(\xi_{]n-1,n]}\le1\right)\cap\bigcap_{\ell: 10^\ell>m}\left(\xi_{]10^\ell-1,10^\ell]}=0\right). $$ 
Defining the random variable $J_m$ as the largest index such that $N_{J_m}\le m$, the realization of $A_m$ ensures that the shift of index between the process $(X_k^m)$ and the integers $N_k$ that are larger than $m+1$ remains constant, equal to $J_m$. Therefore, for any $k\ge 1$
$$ 0\le N_{J_m+k}-X_k^m <1. $$
Moreover, since $A_m$ also forbids that any $N_k>m$ be of the form $10^\ell$, we get (conditionally to $A_m$)
\begin{equation}
 \label{eq:conditionally to A_m}
\forall k\ge 1,\quad 0\le \mant\left(N_{J_m+k}\right) - \mant\left(X_k^m\right) < \dfrac{10}{N_{J_m+k}} < \dfrac{10}{m}. 
\end{equation}
Note also that $\PP(A_m)\to1$ as $m\to\infty$, so that choosing $m$ large enough will enable us to condition with respect to $A_m$ without affecting too much the probability of any event. Indeed, we will make use of the following inequality, valid for any events $A$ and $B$ with $\PP(A)>0$:
\begin{equation}
 \label{eq:conditioning}
 \left|\PP(B\,|\,A)-\PP(B)\right| \le \dfrac{\PP(A^c)}{\PP(A)}.
\end{equation}

Let us fix an arbitrary $\varepsilon>0$. We choose $m$ large enough so that 
\begin{equation}
 \label{eq:m large enough}
\dfrac{\PP(A_m^c)}{\PP(A_m)}<\varepsilon\quad \text{ and }\quad \dfrac{10}{m}<\varepsilon.
\end{equation}

Conditioning with respect to $J_m$ which takes values in $\{1,\ldots,m\}$, we get
\begin{multline*} 
 \left| \PP\Bigl(\mant(N_k)\in[\alpha, \beta ]\Bigr) - \Pb([\alpha, \beta ]) \right| \\
\le 
   \sum_{j=1}^{m}  \PP(J_m=j) \left| \PP\Bigl(\mant(N_k)\in[\alpha, \beta ]\,|\,J_m=j \Bigr) - \Pb([\alpha, \beta ]) \right|.
\end{multline*}
Then we write, for any $1\le j\le m$,
$$ \left| \PP\Bigl(\mant(N_k)\in[\alpha, \beta ]\,|\,J_m=j \Bigr) - \Pb([\alpha, \beta ]) \right| \le D_1+D_2+D_3+D_4, $$
where
\begin{align*}
 &D_1 \egdef \left| \PP\Bigl(\mant(N_k)\in[\alpha, \beta ]\,|\,J_m=j \Bigr) - \PP\Bigl(\mant(N_k)\in[\alpha, \beta ]\,|\,A_m,\ J_m=j \Bigr) \right|\\
 &D_2 \egdef \left| \PP\Bigl(\mant(N_{J_m+k-j})\in[\alpha, \beta ]\,|\,A_m,\ J_m=j \Bigr)\right.\\
 &           \hspace{5cm}      \left.- \PP\Bigl(\mant(X_{k-j}^m)\in[\alpha, \beta ]\,|\,A_m,\ J_m=j \Bigr) \right|,\\
 &D_3 \egdef  \left| \PP\Bigl(\mant(X_{k-j}^m)\in[\alpha, \beta ]\,|\,A_m,\ J_m=j \Bigr) - \PP\Bigl(\mant(X_{k-j}^m)\in[\alpha, \beta ]\,|\,J_m=j \Bigr) \right|\\
&D_4 \egdef \left| \PP\Bigl(\mant(X_{k-j}^m)\in[\alpha, \beta ]\,|\,J_m=j \Bigr) - \Pb([\alpha, \beta ]) \right|.
\end{align*}

Observe that $A_m$, which is measurable with respect to the Poisson process on $]m,+\infty[$, is independent of $(J_m=j)$, which is measurable with respect to the Poisson process on $]1,m]$. Hence, using~\eqref{eq:conditioning} and~\eqref{eq:m large enough}, we can bound $D_1+D_3$ by $2\varepsilon$. 

Again, $X_{k-j}^m$ is measurable with respect to the Poisson process on $]m,+\infty[$, hence is independent of $(J_m=j)$. By~\eqref{eq:convergence of X_k^m}, the contribution of $D_4$ can be bounded by $(9/10)^{k-j}$, hence by $(9/10)^{k-m}$.

It remains to deal with $D_2$. Since everything is conditioned on $A_m$, we can use~\eqref{eq:conditionally to A_m} to get
\begin{multline*}
 D_2 \le \PP\Bigl(\mant(X_{k-j}^m)\in[\alpha-10/m,\alpha]\,|\,A_m,\ J_m=j \Bigr) \\
+ \PP\Bigl(\mant(X_{k-j}^m)\in[\beta-10/m,\beta]\,|\,A_m,\ J_m=j \Bigr). 
\end{multline*}
Using again~\eqref{eq:conditioning}, \eqref{eq:m large enough} and~\eqref{eq:convergence of X_k^m} yields
$$ D_2 \le 2 (9/10)^{k-m} + 2\varepsilon + \Pb\left([\alpha-10/m,\alpha]\right) + \Pb\left([\beta-10/m,\beta]\right) \le 2 (9/10)^{k-m} + 4\varepsilon. $$
\end{proof}

Remark that the statement of Theorem~\ref{thm:single} would not hold if we replace the interval $[\alpha, \beta ]$ by any Borel set $S$. Indeed, since $N_k$ is an integer, the probability that its mantissa belong to the set of irrational numbers is zero. In other words, the convergence of the distribution of $\mant(N_k)$ to Benford's law is only a weak convergence. However, if we denote by $\tilde X_k$ the largest point of the Poisson process which is smaller than $N_k$, then $\tilde X_k\in]N_k-1,N_k]$, and the distribution of $\mant(\tilde X_k)$ converges to Benford's law in total variation norm.

\subsection{Local URI-density is stronger than natural density}
\label{sec:localURI-density}
In view of Theorem~\ref{thm:single}, it is natural to introduce the \emph{local URI-density} of $A\subset\ZZ_+$
as the limit, when $k\to\infty$, of $\PP(N_k\in A)$ (whenever the limit exists).
The purpose of this section is to prove that local URI-density is stronger than natural density:

\begin{theo}
\label{thm:natural_implies_local_URI}
 If $A\subset\ZZ_+$ possesses a natural density, then the local URI-density of $A$ exists and coincides with its natural density.
\end{theo}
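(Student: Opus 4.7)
The plan is to exploit the Markov structure of the sequence $(N_k)_{k\ge1}$ and reduce the theorem to the behaviour, as $n\to\infty$, of its one-step transition kernel applied to $\ind{A}$.

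Since the Bernoulli random variables $(\ind{\E}(j))_{j\ge1}$ used to construct $\E$ are independent, $\E\cap\{1,\ldots,n\}$ and $\E\cap\{n+1,n+2,\ldots\}$ are independent, so the conditional law of $N_{k+1}$ given $(N_1,\ldots,N_k)$ depends only on $N_k$. A direct computation yields the Markov transition
$$\PP(N_{k+1}=\ell\mid N_k=n)=\frac{1}{\ell}\prod_{j=n+1}^{\ell-1}\Bigl(1-\frac{1}{j}\Bigr)=\frac{n}{\ell(\ell-1)},\qquad\ell>n.$$
Consequently, setting $f(n):=\PP(N_{k+1}\in A\mid N_k=n)=n\sum_{\ell>n}\ind{A}(\ell)\bigl(\tfrac{1}{\ell-1}-\tfrac{1}{\ell}\bigr)$, one has $\PP(N_{k+1}\in A)=\EE[f(N_k)]$; note that $f$ does not depend on $k$.

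The heart of the proof is to show $f(n)\to\alpha$. Writing $B_N:=\sum_{\ell=1}^N\ind{A}(\ell)$, so that $B_N/N\to\alpha$ by hypothesis, Abel summation applied to $\sum_{\ell>n}\ind{A}(\ell)\,c_\ell$ with $c_\ell:=1/(\ell(\ell-1))$ (using $B_L c_L\to0$ as $L\to\infty$) gives
$$f(n)=-\frac{B_n}{n+1}+2n\sum_{\ell>n}\frac{B_\ell}{\ell(\ell^2-1)}.$$
The first term converges to $-\alpha$. For the second, the partial-fraction identity $\sum_{\ell>n}\tfrac{1}{\ell^2-1}=\tfrac{1}{2}\bigl(\tfrac{1}{n}+\tfrac{1}{n+1}\bigr)$ combined with a standard $\varepsilon$-truncation based on $B_\ell/\ell\to\alpha$ shows that it converges to $2\alpha$. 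Hence $f(n)\to\alpha$.

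Finally, since $N_k\ge k$ deterministically, given $\varepsilon>0$ one picks $M$ with $|f(n)-\alpha|<\varepsilon$ for every $n>M$; then for all $k>M$ the support of $N_k$ lies above $M$, so $|\PP(N_{k+1}\in A)-\alpha|=|\EE[f(N_k)]-\alpha|\le\varepsilon$. This proves $\PP(N_k\in A)\to\alpha$, which is precisely the local URI-density of $A$. The only substantive step is the Abel-summation estimate of $f(n)$; the rest is bookkeeping.
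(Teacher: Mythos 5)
Your proof is correct, and it takes a genuinely different --- and considerably shorter --- route than the paper's. You exploit the Markov property of $(N_k)$: writing $\PP(N_{k+1}\in A)=\EE\bigl[f(N_k)\bigr]$ with the $k$-independent one-step average $f(n)=n\sum_{\ell>n}\ind{A}(\ell)\,\frac{1}{\ell(\ell-1)}$, you reduce the theorem to the purely deterministic statement that $f(n)\to\alpha$. Your Abel summation checks out: with $B_N=\sum_{\ell\le N}\ind{A}(\ell)$ one indeed gets $f(n)=-\frac{B_n}{n+1}+2n\sum_{\ell>n}\frac{B_\ell}{\ell(\ell^2-1)}$, the telescoping identity $\sum_{\ell>n}\frac{1}{\ell^2-1}=\frac12\bigl(\frac1n+\frac1{n+1}\bigr)$ is correct, and the $\varepsilon$-truncation gives $-\alpha+2\alpha=\alpha$; the crude bound $N_k\ge k$ then finishes the argument with no information needed about the law of $N_k$ itself. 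The paper proceeds quite differently: it first proves a general criterion (Lemma~\ref{lemma:type_de_distributions}) stating that any sequence of unimodal distributions $P_k$ on $\ZZ_+$ whose profile is asymptotically flat over a multiplicative neighbourhood of its mode (i.e.\ $P_k(mn_k)/P_k(n_k)\to1$) is stronger than natural density, and then verifies, through a rather delicate induction on the ratios $f_n(k)=P_{k-1}(n-1)/P_k(n-1)$, that the laws $P_k(n)=\PP(N_k=n)$ satisfy these hypotheses. What your argument buys is brevity and a transparent mechanism: one transition of the chain is already a regular averaging kernel that respects Ces\`aro limits, essentially because its tail function $\ell\mapsto n/\ell$ lets one integrate $B_\ell/\ell$ against an $O(n/\ell^2)$ density. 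What the paper's argument buys is structural information of independent interest --- unimodality and multiplicative flatness of the distribution of $N_k$, in the spirit of Diaconis's comparison of summation methods --- which your proof bypasses entirely.
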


In fact, local URI-density turns out to be \emph{strictly} stronger than natural density, since Theorem~\ref{thm:single} proves the existence of sets $A$ without natural densities but for which the local URI-density exists.

The proof of Theorem~\ref{thm:natural_implies_local_URI} is based on the following lemma.
\begin{lemma}
 \label{lemma:type_de_distributions}
Let $(P_k)_{k\ge1}$ be a sequence of probability distributions on $\ZZ_+$ satisfying: For any $k\ge1$, there exists $n_k$ such that
\begin{itemize}
 \item The map $n\mapsto P_k(n)$ is non-decreasing on $\{1, \dots, n_k\}$ and non-increasing on $\{n_k, n_k+1, \dots\}$;
 \item For any integer $m\ge1$, 
$$
\lim_{k\to\infty}\frac{P_k(mn_k)}{P_k(n_k)}=1 .
$$
\end{itemize}
Then, if $A\subset\ZZ_+$ possesses a natural density $\alpha$, $\lim_{k\to\infty} P_k(A)=\alpha$.
\end{lemma}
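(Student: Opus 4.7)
Set $f(n) := |A \cap \{1,\ldots,n\}|$, so the natural-density hypothesis reads $f(n) = \alpha n + o(n)$. The plan is to rewrite $P_k(A)$ via Abel summation as a weighted sum involving $f$ and exploit the unimodality and slow-variation assumptions to show that the relevant weights act like a natural-density average. Because each $P_k$ is unimodal and summable, the tail bound $nP_k(n)\to 0$ as $n\to\infty$ holds, which justifies the identity
$$P_k(A) = \sum_{n\ge 1} f(n)\bigl(P_k(n)-P_k(n+1)\bigr).$$
Applied to $A=\ZZ_+$, the same identity gives $\sum_{n\ge 1} n\bigl(P_k(n)-P_k(n+1)\bigr)=1$, hence
$$P_k(A) - \alpha = \sum_{n\ge 1}\bigl(f(n)-\alpha n\bigr)\bigl(P_k(n)-P_k(n+1)\bigr).$$

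The substantive step is to extract from the hypotheses that $n_k P_k(n_k)\to 0$. Given $m\ge 2$, the second assumption yields $P_k(mn_k)\ge\tfrac12 P_k(n_k)$ for $k$ large; unimodality on $[n_k,\infty)$ then gives $P_k(n)\ge\tfrac12 P_k(n_k)$ for $n_k<n\le mn_k$, so $1\ge\tfrac12(m-1)n_k P_k(n_k)$, and letting $m\to\infty$ proves the claim. In particular $P_k(n_k)\to 0$ and $\sum_{n\le n_k}P_k(n)\le n_k P_k(n_k)\to 0$, so the mass of $P_k$ escapes to infinity while the modal value vanishes. A short Abel-type computation then shows that the total-variation weight $\sum_{n\ge 1} n|P_k(n)-P_k(n+1)|$ converges to $1$: it differs from $\sum_n n(P_k(n)-P_k(n+1))=1$ only by twice the contribution of the nondecreasing piece on $[1,n_k]$, which is at most $2 n_k P_k(n_k)\to 0$.

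To finish, fix $\varepsilon>0$ and choose $N_0$ with $|f(n)-\alpha n|\le\varepsilon n$ for $n\ge N_0$. Split the error sum at $N_0$: the $n<N_0$ part is bounded by $N_0\cdot\sum_{n<N_0}|P_k(n)-P_k(n+1)|\le 2N_0^2 P_k(n_k)\to 0$, using $|P_k(n)-P_k(n+1)|\le 2P_k(n_k)$; the $n\ge N_0$ part is at most $\varepsilon\sum_n n|P_k(n)-P_k(n+1)|=\varepsilon(1+o(1))$ by the estimate above. Hence $\limsup_k|P_k(A)-\alpha|\le\varepsilon$, and since $\varepsilon$ is arbitrary, $P_k(A)\to\alpha$.

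The main obstacle I expect is the one-sided nature of the slow-variation hypothesis: it controls $P_k$ on $[n_k,mn_k]$ but a priori not on $[n_k/m,n_k]$, so one cannot directly compare $P_k$ to a uniform measure on a log-scale neighbourhood of $n_k$. Abel summation bypasses this asymmetry cleanly, because the weights $P_k(n)-P_k(n+1)$ feel only the total variation of $P_k$, and the nondecreasing piece below $n_k$ is automatically tamed by $n_k P_k(n_k)\to 0$, a bound that already follows from the one-sided hypothesis alone.
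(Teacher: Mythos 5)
Your proof is correct, and it takes a genuinely different route from the paper's. The paper uses a layer-cake (Fubini) representation $P_k(A)=\int_0^{P_k(n_k)}|\{n\in A:P_k(n)>t\}|\,dt$: by unimodality the level sets are intervals $[n_{\min}^k(t),n_{\max}^k(t)]$ straddling $n_k$, and the ratio hypothesis is used to show that for $t<\theta P_k(n_k)$ these intervals reach beyond $mn_k$, so that the proportion of elements of $A$ they contain is governed by the natural density; the top slice $t\in[\theta P_k(n_k),P_k(n_k)]$ contributes at most $(1-\theta)/\theta$. You instead sum by parts against the signed increments $P_k(n)-P_k(n+1)$ and distill the entire hypothesis into the single scalar estimate $n_kP_k(n_k)\to0$, which simultaneously tames the negative increments below the mode (so the total variation $\sum_n n|P_k(n)-P_k(n+1)|\to1$) and kills the small-$n$ contribution. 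All the steps check out: the boundary term in Abel summation vanishes because $nP_k(n)\to0$ for a non-increasing summable tail; the derivation $n_kP_k(n_k)\le 2/(m-1)$ for large $k$ is exactly right; and the final $\varepsilon$-split is routine. A nice byproduct of your argument is that it proves a formally stronger lemma, with the second hypothesis weakened to ``$n_kP_k(n_k)\to0$'' (a strictly weaker condition, as the example $P_k(n)=c_k/n$ on $\{1,\dots,k^2\}$ shows), and it is consistent with the paper's remark about binomial distributions, for which $n_kB_k(n_k)$ tends to infinity. The paper's proof, in exchange, is more geometric and makes visible exactly which long intervals the mass of $P_k$ is spread over.
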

\begin{figure}[htp]
\label{fig:profile}
 \centering
 \input{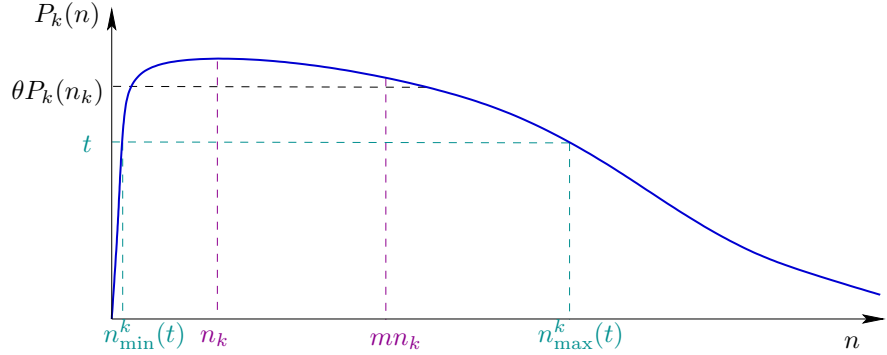}
 \caption{Profile of $P_k$}
\end{figure}

\begin{proof}
Let us fix $\varepsilon>0$. 
 Let $\theta\in]0,1[$, close enough to 1 so that $(1-\theta)/\theta<\varepsilon$.
Let $m\in\ZZ_+$ be such that $m>1/\varepsilon$ and such that, for any $n\ge m$, 
$$\frac1{n}\sum_{i=1}^n \ind{A}(i) \in  \,]\alpha-\varepsilon, \alpha+\varepsilon[.$$
We choose $k$ large enough such that 
\begin{equation}
 \label{Eq:choice of k}
\frac{P_k(mn_k)}{P_k(n_k)} > \theta.
\end{equation}
By a Fubini argument, we can write $P_k(A)$ as
\begin{equation}
 \label{Eq:Fubini}P_k(A)=\int_0^{P_k(n_k)} \left|\{n\in A: P_k(n)>t\}\right|\, dt.
\end{equation}
We split the integral into two terms 
$$
I_1\egdef \int_0^{\theta P_k(n_k)} \left|\{n\in A: P_k(n)>t\}\right|\, dt \mbox{ and }
I_2\egdef \int_{\theta P_k(n_k)}^{P_k(n_k)} \left|\{n\in A: P_k(n)>t\}\right|\, dt.
$$
Observe that 
\begin{multline*}
 \theta P_k(n_k)\ \left|\{n\in\ZZ_+: P_k(n)>\theta P_k(n_k)\}\right| \\
\le P_k\left(\left|\{n\in\ZZ_+: P_k(n)>\theta P_k(n_k)\}\right|\right) \le 1.
\end{multline*}
Therefore
$$
I_2\le (1-\theta)P_k(n_k)\ \left|\{n\in\ZZ_+: P_k(n)>\theta P_k(n_k)\}\right|\le \frac{1-\theta}{\theta}<\varepsilon.
$$
Let us turn to the estimation of $I_1$. 
By the hypothesis on the variations of $P_k(n)$, for any $0 <t<P_k(n_k)$, there exist 
$n_{\min}^k(t)\le n_k\le n_{\max}^k(t)$ such that
$$
\left\{n: P_k(n)\ge t\right\} = \left\{n_{\min}^k(t), \dots, n_{\max}^k(t)\right\}.
$$
(See Figure~\ref{fig:profile}.) We can rewrite $I_1$ as 
$$
\int_0^{\theta P_k(n_k)} \Bigl(n_{\max}^k(t)-n_{\min}^k(t)+1\Bigr) \varphi(t)\, dt, 
$$
where 
$$
\varphi(t)\egdef \frac1{n_{\max}^k(t)-n_{\min}^k(t)+1} \sum_{i=n_{\min}^k(t)}^{n_{\max}^k(t)} \ind{A}(i).
$$
We prove that, for  $0 <t<\theta P_k(n_k)$, $\varphi(t)$ is close to the natural density of $A$:
By~\eqref{Eq:choice of k}, for any $0 <t<\theta P_k(n_k)$, we have $n_{\max}^k(t)> m n_k$. Thus
$$
1\le \frac{n_{\max}^k(t)}{n_{\max}^k(t)-n_{\min}^k(t)+1} \le \frac{m}{m-1}\le \frac1{1-\varepsilon},
$$
and 
$$
\frac{1}{n_{\max}^k(t)}\sum_{n=1}^{n_{\min}^k(t)-1} \ind{A}(n) \le \frac{n_{\min}^k(t)-1}{n_{\max}^k(t)}
\le \frac{1}{m} <\varepsilon.
$$
Since $n_{\max}^k(t)>m$, 
$$
\frac{1}{n_{\max}^k(t)}\sum_{n=1}^{n_{\max}^k(t)} \ind{A}(n) \in \,]\alpha-\varepsilon, \alpha+\varepsilon[, 
$$
It follows that for $0 <t<\theta P_k(n_k)$, $ \alpha-2\varepsilon < \varphi(t) < (\alpha+\varepsilon)/(1-\varepsilon)$.

Hence we get the following estimation: 
$$ (\alpha-2\varepsilon)I_3< I_1 < \dfrac{\alpha+\varepsilon}{1-\varepsilon} I_3, $$
where
$$ I_3\egdef\int_0^{\theta P_k(n_k)} \Bigl(n_{\max}^k(t)-n_{\min}^k(t)+1\Bigr) \, dt .$$
Using \eqref{Eq:Fubini} with $A=\ZZ_+$, we get
$$ \int_0^{P_k(n_k)} \Bigl(n_{\max}^k(t)-n_{\min}^k(t)+1\Bigr) \, dt = P_k(\ZZ_+) = 1,$$
and by the same argument as for the estimation of $I_2$, we have
$$ \int_{\theta P_k(n_k)}^{P_k(n_k)} \Bigl(n_{\max}^k(t)-n_{\min}^k(t)+1\Bigr) \, dt < \frac{1-\theta}{\theta}<\varepsilon. $$
hence $1-\varepsilon<I_3\le 1$.
\end{proof}

Observe that the second condition in the lemma is crucial. Indeed, the sequence of binomial distributions of parameter $p\in]0,1[$ defined by 
$$B_k(n)\egdef \binom{k}{n} p^n(1-p)^{k-n}, 0\le n\le k$$
satisfies the first assumption of the lemma. However there exists a set $A$ possessing a natural density, but for which 
$B_k(A)$ fails to converge to this natural density as $k\to\infty$ (see~\cite{diaconis_thesis}, Theorem~3 page~25).

\begin{proof}[Proof of Theorem~\ref{thm:natural_implies_local_URI}]
 Defining $P_k(n)\egdef \PP(N_k=n)$, we have to check the hypotheses of Lemma~\ref{lemma:type_de_distributions}.
We start by establishing an induction formula for $P_k(n)$. Recall that $\PP(N_k=n)=0$ if $n<k$, and that $\PP(N_1=n)=\ind{n=1}$. For $2\le k\le n$, we decompose
$$ \PP(N_k=n) =  \PP(N_k=n, N_{k-1}=n-1) + \PP(N_k=n, N_{k-1}\le n-2). $$
The first term in the RHS is $(1/n)\PP(N_{k-1}=n-1)$. The second term can be written as
\begin{align*}
 \PP(N_k=n, N_{k-1}<n-1) &= \frac{1}{n}\left(1-\frac{1}{n-1}\right) \PP(|\E_{n-2}|=k-1)\\
& = \frac{n-2}{n}\frac{1}{n-1} \PP(|\E_{n-2}|=k-1)\\
& = \frac{n-2}{n} \PP(N_k=n-1).
\end{align*}
This yields, for any $2\le k \le n$,
\begin{equation}
 \label{eq:induction for P_k}
P_k(n) = \frac{n-2}{n} P_k(n-1) + \frac{1}{n} P_{k-1}(n-1). 
\end{equation}
For $2\le k\le n-1$, dividing by $P_k(n-1)$ we obtain
\begin{equation}
\label{eq:f_n}
 \frac{P_k(n)}{P_k(n-1)}=1+\frac{1}{n}\Bigl(f_n(k)-2\Bigr),
\end{equation}
where
$$ f_n(k)\egdef \frac{P_{k-1}(n-1)}{P_k(n-1)} 
. $$
We prove by induction on $n\ge 4$ that $k\in\{2,\ldots,n-1\}\mapsto f_n(k)$ is a non-decreasing function. Observe that $f_4(2)=0$ and $f_4(3)=1$, hence $f_4$ is non-decreasing. Assume that $f_{n-1}$ is non-decreasing for some $n\ge5$. Using~\eqref{eq:induction for P_k}, we get for $3\le k\le n-1$
$$  
f_n(k) = \dfrac{\dfrac{n-3}{n-1} P_{k-1}(n-2)+\dfrac{1}{n-1}P_{k-2}(n-2)}{\dfrac{n-3}{n-1}P_{k}(n-2)+\dfrac{1}{n-1}P_{k-1}(n-2) }.
$$
If $3\le k\le n-2$, we get 
\begin{equation}
\label{eq:f_nbis}
f_n(k) = \dfrac{n-3 + f_{n-1}(k-1)}{\dfrac{n-3}{f_{n-1}(k)}+1}. 
\end{equation}
Hence, by induction, $f_n$ is non-decreasing on $\{2, \dots, n-2\}$.
Moreover, for $k=n-1$, since $P_{n-1}(n-2)=0$,
$$ f_n(n-1) = n-3 + f_{n-1}(n-2) \ge n-3 + f_{n-1}(n-3) \ge f_n(n-2) .$$
Observe also that $f_n(2)=0$ and $f_n(n-1)=(n-3)(n-2)/2$ for all $n\ge 4$. 
Hence, for all $n\ge5$, there exists an integer $k_n$ such that $f_n(k)\le 2$ for $2\le k\le k_n$ and $f_n(k)> 2$ for $k> k_n$.
Since $f_{n-1}(k-1)\le f_{n-1}(k)$ for any $3\le k\le n-2$, we get by~\eqref{eq:f_nbis} that 
$f_n(k)\le f_{n-1}(k)$, which proves that $n\mapsto k_n$ is non-decreasing. 

For any fixed $k\ge 3$, let $n_k$ be the smallest integer $n$ such that $k_n\ge k$. 
By~\eqref{eq:f_n}, $n\mapsto P_k(n)$ is non-decreasing upto $n_k$ and non-increasing after $n_k$. 
Note that $n_k$ exists, otherwise $n\mapsto P_k(n)$ would be non-decreasing, which is obviously impossible.
Therefore, the first hypothesis of Lemma~\ref{lemma:type_de_distributions} is satisfied.

For all $k\ge3$, observe that $n_k$ is characterized by the following:
\begin{equation}
 \label{eq:n_k}
f_{n_k}(k)\le 2,\quad\mbox{and } f_{n_k-1}(k)>2.
\end{equation}

To check that $(P_k)$ satisfies the second hypothesis, we need precise estimations of $f_n(k)$. We start by establishing a formula for $P_k(n)$.
Observe that for all $1\le j <n$, $\PP(N_k=n | N_{k-1}=j)$ is equal to 
$$
\PP(j+1\notin\E, \dots n-1\notin\E, n\in\E)=\frac{j}{j+1}\dots\frac{n-2}{n-1}\frac{1}{n}=
\frac{j}{n(n-1)}.
$$
Hence, by conditioning, $P_k(n)=\PP(N_k=n)$ can be rewritten as 
\begin{multline*}
\sum_{2\le j_2< \dots <j_{k-1}\le n-1} \PP(N_k=n | N_{k-1}=j_{k-1}) \PP(N_{k-1}=j_{k-1}|N_{k-2}=j_{k-2})\\
\dots \quad\PP(N_{3}=j_{3}|N_{2}=j_{2})\PP(N_{2}=j_{2})
\end{multline*}
which yields 
\begin{align*}
P_k(n)
=& \sum_{2\le j_2< \dots <j_{k-1}\le n-1} \frac{j_{k-1}}{n(n-1)}\frac{j_{k-2}}{j_{k-1}(j_{k-1}-1)} \dots \frac{j_{2}}{j_{3}(j_{3}-1)} \frac{1}{j_{2}(j_{2}-1)} \\
=& \frac1{n(n-1)} \sum_{2\le j_2< \dots <j_{k-1}\le n-1} \frac1{j_{k-1}-1} \dots \frac1{j_{3}-1} \frac{1}{j_{2}-1}\\
=& \frac1{n(n-1)} \sum_{1\le j_2< \dots <j_{k-1}\le n-2} \frac1{j_{2}j_{3}\dots j_{k-1}}.
\end{align*}

We use this formula to estimate the denominator in the definition of $f_n(k)$:
\begin{align*}
P_k(n-1)
&=\frac1{(n-1)(n-2)} \sum_{1\le j_2< \dots <j_{k-1}\le n-3} \frac1{j_{2}j_{3}\dots j_{k-1}}\\
&=\frac1{(n-1)(n-2)} \sum_{1\le j_2< \dots <j_{k-2}\le n-3} \frac1{j_{2}j_{3}\dots j_{k-2}} \quad
C(j_2, \dots, j_{k-2}),
\end{align*}
where
$$C(j_2, \dots, j_{k-2})\egdef\frac1{k-2}\sum_{1\le j\le n-3 \atop j\notin\{j_2, \dots, j_{k-2}\}} \frac1{j}.$$
Observe that 
$$
\frac1{k-2}\sum_{k-2\le j\le n-3} \frac1{j} \le C(j_2, \dots, j_{k-2})\le \frac1{k-2}\sum_{1\le j\le n-3} \frac1{j},
$$
which gives the following estimation 
\begin{equation}
 \label{eq:estimation f_n(k)}
\frac{k-2}{\sum_{1\le j\le n-3} \frac1{j}}
\le f_n(k)=\frac{P_{k-1}(n-1)}{P_k(n-1)} 
\le \frac{k-2}{\sum_{k-2\le j\le n-3} \frac1{j}}\cdot
\end{equation}

Let $m\ge1$. Applying this estimation to $f_{n_k-1}$ and $f_{mn_k}$, we get
$$
0 \le f_{n_k-1}(k)- f_{mn_k}(k) \le \frac{k-2}{\sum_{k-2\le j\le n_k-4} \frac1{j}}- \frac{k-2}{\sum_{1\le j\le mn_k-3} \frac1{j}}\cdot
$$
The RHS of the above inequality can be written as a product $AB$, where
$$
A\egdef \frac{k-2}{\sum_{k-2\le j\le n_k-4} \frac1{j}} \quad\mbox{and }B\egdef\frac{\sum_{1\le j\le mn_k-3} \frac1{j} - \sum_{k-2\le j\le n_k-4} \frac1{j}}{\sum_{1\le j\le mn_k-3} \frac1{j}}\cdot
$$
By~\eqref{eq:estimation f_n(k)} and~\eqref{eq:n_k}, we get 
\begin{equation}
 \label{eq:lien k n_k}
\frac{k-2}{\sum_{1\le j\le n_k-3} \frac1{j}}\le2 , 
\end{equation}
which ensures that $A$ is bounded (say, by 4). Moreover, an easy computation shows that $B\sim \frac{\ln k}{\ln n_k}$, which goes to 0 as $k\to\infty$ by~\eqref{eq:lien k n_k}.
Recalling that $f_{n_k-1}(k)>2$ by~\eqref{eq:n_k}, the above estimations prove the following property: For any $\varepsilon>0$, for $k$ large enough, $f_n(k)>2-\varepsilon$ for each $n_k\le n\le mn_k$. 
For such $k$, we get by~\eqref{eq:f_n} 
$$
\frac{P_k(mn_k)}{P_k(n_k)} = \prod_{n=n_k+1}^{mn_k}\frac{P_k(n)}{P_k(n-1)}  = \prod_{n=n_k+1}^{mn_k} \left( 1+ \frac{1}{n}(f_{n}(k)-2)\right)
\ge \left(1-\frac{\varepsilon}{n_k}\right)^{(m-1)n_k}.
$$
On the other hand, we know that $P_k(n_k)\ge P_k(mn_k)$, which proves that
$$
\lim_{k\to\infty}\frac{P_k(mn_k)}{P_k(n_k)}=1 .
$$
\end{proof}

\section{Open problems and discussion}
\label{sec:discussion}
\subsection{Connection with other densities}

We conjecture that the existence of local URI-density implies the existence of URI-density (and, in this case, that both coincide).

It is not clear either whether local URI-density is equivalent to the $H^\infty$-density used by Flehinger. 
However we can provide the following interpretation of the $H^\infty$-density in terms of our URI-set. Recall that in the proof of Theorem~\ref{thm:cesaro}, we ordered the elements of $\E_n=\E\cap\{1,\ldots,n\}$ backwards:
$$ \E_n=\left\{Y_1^{(n)} > Y_2^{(n)} > \ldots > Y_{|\E_n|}^{(n)}=1\right\}.$$

\begin{prop}[Stochastic interpretation of $H^\infty$-density]
 For $A\subset\ZZ_+$, $A$ has $H^\infty$-density $\alpha$ if and only if 
$$ \lim_{k\to\infty} \liminf_{n\to\infty} \PP\left(Y_k^{(n)}\in A\right)
=\lim_{k\to\infty} \limsup_{n\to\infty} \PP\left(Y_k^{(n)}\in A\right)=\alpha. $$
\end{prop}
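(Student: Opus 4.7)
The plan is to show that $f_k(n) := \PP(Y_k^{(n)} \in A)$ (with the convention that $\ind{A}(Y_k^{(n)})=0$ whenever $|\E_n|<k$) satisfies almost the same recurrence as the iterated Cesàro averages $P_n^k$, with an error that vanishes as $n\to\infty$ for each fixed $k$. Passing to $\liminf_n$ and $\limsup_n$ then $k\to\infty$ will identify the two notions.

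First, by property~(U), $Y_1^{(n)}$ is uniform on $\{1,\ldots,n\}$, so $f_1(n) = \frac{1}{n}\sum_{j=1}^n \ind{A}(j) = P_n^1$. Next, I condition on the value of $Y_1^{(n)}$ to establish, for $k\ge 2$,
\begin{equation}
\label{eq:f_recurrence}
f_k(n) = \frac{1}{n}\sum_{j=1}^{n-1} f_{k-1}(j).
\end{equation}
Indeed, the event $\{Y_1^{(n)}=j\}$ only involves the Bernoulli variables $\ind{\E}(j),\ind{\E}(j+1),\ldots,\ind{\E}(n)$; by independence of the Bernoulli construction of $\E$, conditionally on this event the set $\E\cap\{1,\ldots,j-1\}$ still has the law of a URI-set truncated to $\{1,\ldots,j-1\}$. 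Consequently $(Y_k^{(n)}\mid Y_1^{(n)}=j) \stackrel{d}{=} Y_{k-1}^{(j-1)}$ for $k\ge 2$, which gives \eqref{eq:f_recurrence} after summing over $j$ (the term $j=1$ contributes zero by our convention).

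The comparison with $P_n^k = \frac{1}{n}\sum_{j=1}^n P_j^{k-1}$ is now direct:
\begin{equation*}
P_n^k - f_k(n) = \frac{1}{n} P_n^{k-1} + \frac{1}{n}\sum_{j=1}^{n-1}\bigl(P_j^{k-1} - f_{k-1}(j)\bigr).
\end{equation*}
Since all quantities involved lie in $[0,1]$, an easy induction on $k$ shows that for each fixed $k$, $|P_n^k - f_k(n)| \to 0$ as $n\to\infty$ (one obtains for instance a bound of order $(\ln n)^{k-2}/n$). Therefore
\begin{equation*}
\liminf_{n\to\infty} f_k(n) = \liminf_{n\to\infty} P_n^k, \qquad \limsup_{n\to\infty} f_k(n) = \limsup_{n\to\infty} P_n^k,
\end{equation*}
and taking $k\to\infty$ on both sides yields the equivalence claimed in the proposition.

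The only substantive step is \eqref{eq:f_recurrence}; once this is in place the rest is a routine induction. The key conceptual point is the self-similarity of the URI-set conditionally on the value of its largest element in $\{1,\ldots,n\}$, which ultimately stems from property~(I) together with the independent-Bernoulli construction. No difficult estimate is needed, because boundedness of $f_k$ and $P_n^k$ makes the propagation of the error in the induction trivial.
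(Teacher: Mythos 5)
Your proof is correct, and it reaches the conclusion by a genuinely different mechanism than the paper's. The paper introduces an auxiliary non-strictly decreasing chain $\widetilde Y_1^{(n)}\ge \widetilde Y_2^{(n)}\ge\cdots$ (each step uniform on $\{1,\ldots,\widetilde Y_i^{(n)}\}$), computes $\PP\bigl(\widetilde Y_k^{(n)}\in A\bigr)=P_n^k$ exactly, and then transfers the result to $Y_k^{(n)}$ by conditioning on the event $D_k^n$ that the first $k$ steps strictly decrease, whose probability tends to $1$ for fixed $k$. You instead derive the exact recurrence $f_k(n)=\frac1n\sum_{j=1}^{n-1}f_{k-1}(j)$ directly from the self-similarity of the URI-set given $Y_1^{(n)}=j$ (which is indeed legitimate: that event depends only on $\ind{\E}(j),\ldots,\ind{\E}(n)$, hence is independent of $\E_{j-1}$), and compare it term by term with the recurrence $P_n^k=\frac1n\sum_{j=1}^nP_j^{k-1}$; the discrepancy at each level is the single boundary term $\frac1nP_n^{k-1}$, and a trivial induction gives $|P_n^k-f_k(n)|=O\bigl((\ln n)^{k-2}/n\bigr)$. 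Both arguments rest on the same underlying observation, namely that the Flehinger iterates $P_n^k$ are the $k$-step marginals of a ``pick a uniformly smaller or equal integer'' chain, but yours has the merit of being an exact identity at every $n$ with an explicit rate; by contrast, the paper's identification $\PP\bigl(Y_k^{(n)}\in A\bigr)=\PP\bigl(\widetilde Y_k^{(n)}\in A\mid D_k^n\bigr)$ is really only asymptotic (the conditioned chain carries weights $1/y_i$ where the URI chain carries $1/(y_i-1)$), so your route is if anything cleaner. Two small points you should make explicit: first, that $\PP(|\E_n|\ge k)\to1$ as $n\to\infty$ for each fixed $k$, so your convention $f_k(n)=\PP\bigl(Y_k^{(n)}\in A,\ |\E_n|\ge k\bigr)$ does not alter $\liminf_n$ or $\limsup_n$ and hence matches the quantity in the statement; second, that the outer limits in $k$ exist simultaneously for $(f_k)$ and $(P^k)$ precisely because the inner $\liminf_n$ and $\limsup_n$ coincide for every $k$, which is what makes the final ``taking $k\to\infty$'' step an equivalence rather than a one-way implication.
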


\begin{proof}
For each $n\ge1$, we introduce a non-increasing sequence of random integers $(\widetilde Y_i^{(n)})_{i\ge1}$, with the following distribution:
\begin{itemize}                                                                                                                                       \item $\widetilde Y_1^{(n)}$ is uniformly distributed in $\{1,\ldots,n\}$,
\item conditionally to $\widetilde Y_1^{(n)},\ldots,\widetilde Y_i^{(n)}$, the random variable $\widetilde Y_{i+1}^{(n)}$ is uniformly distributed in $\{1,\ldots,\widetilde Y_i^{(n)}\}$.                                                                                                                                      \end{itemize}
For $A\subset\ZZ_+$, we write $\PP\left(\widetilde Y_k^{(n)}\in A\right)$ as
\begin{multline*} 
\sum_{1\le y_k\le y_{k-1}\le\dots\le y_1\le n} \ind{A}(y_k) \PP\left(\widetilde Y_k^{(n)}=y_k|\widetilde Y_{k-1}^{(n)}=y_{k-1}\right) \cdots\\ \PP\left(\widetilde Y_2^{(n)}=y_2|\widetilde Y_{1}^{(n)}=y_{1}\right) \PP\left(\widetilde Y_1^{(n)}=y_1\right),
\end{multline*}
which yields
$$ \PP\left(\widetilde Y_k^{(n)}\in A\right) = \frac{1}{n}\sum_{y_1=1}^n\frac{1}{y_1}\sum_{y_2=1}^{y_1} \dots \frac{1}{y_{k-1}}\sum_{y_k=1}^{y_{k-1}}\ind{A}(y_k). $$
We recognize $P_n^k$ used in the definition of the $H^\infty$-density (see Section~\ref{sec:log}).
Now, we observe that 
$$ \PP\left(Y_k^{(n)}\in A\right) = \PP\left(\widetilde Y_k^{(n)}\in A | D_k^n\right),  
\mbox{ where }
D_k^n \egdef \left\{\widetilde Y_1^{(n)}>\ldots >\widetilde Y_k^{(n)}\right\}.$$
It remains to prove that, for any fixed $k$, $\PP(D_k^n)\rightarrow 1$ as $n\rightarrow \infty$.
Fix $\epsilon>0$ and choose $\delta>0$ such that $(1-3\delta)^k>1-\epsilon$.
Observe that whenever $n>1/\delta$, the proportion of integers $i\in\{1, \dots, n\}$ such that $\delta<i/n<1-\delta$ is larger than $1-3\delta$. 
Now, if $n>1/\delta^{k}$, we have 
$$
\PP\left(\frac{\widetilde Y_1^{(n)}}{n}\in ]\delta, 1-\delta[,\, \frac{\widetilde Y_2^{(n)}}{\widetilde Y_1^{(n)}}\in ]\delta, 1-\delta[,\, \dots,\,  \frac{\widetilde Y_k^{(n)}}{\widetilde Y_{k-1}^{(n)}}\in ]\delta, 1-\delta[\right) > (1-3\delta)^k>1-\epsilon.
$$
Hence, $\PP(D_k^n)>1-\epsilon$.
\end{proof}

\subsection{Conditional URI-density}

Let $P$ be a subset of $\ZZ_+$ with $\sum_{p\in P}1/p=\infty$, so that the cardinality of $P\cap\E$ be almost surely infinite. We have two ways to define the \emph{URI-density of $A$ conditioned on $P$}. First, by averaging over $P\cap\E$, and consider (whenever it exists) the almost-sure limit of
$$ \dfrac{\sum_{k=1}^n \ind{P\cap A}(N_k)}{\sum_{k=1}^n \ind{P}(N_k)}. $$
Second, by numbering the elements of $P=\{p_1<p_2<\cdots<p_n<\cdots\}$ and averaging over the random subset of $P$
$$ \{p_{N_1}<p_{N_2}<\cdots<p_{n_k}<\cdots\}, $$
that is by considering (whenever it exists) the almost-sure limit of
$$ \dfrac{1}{n} \sum_{k=1}^n \ind{A}(p_{N_k}). $$
Question: are these two definitions equivalent?

\subsection{Asymptotic independence of successive elements in $\E$}

Another question concerning the URI-set $\E$ is the following: consider $A,B\subset \ZZ_+$, and assume that for both $A$ and $B$ we can define the density $d(A)$ and $d(B)$ (these could be the natural densities, the URI-densities, the $H_\infty$-densities or maybe some other notions of densities). 
Under which condition do we have
$$ \dfrac{1}{n} \sum_{k=1}^n \ind{A}(N_k)\ind{B}(N_{k+1}) \tend[\text{a.s.}]{n}{\infty} d(A) d(B) \ ?$$

We conjecture that it is true when both $d(A)$ and $d(B)$ are natural densities. However this is certainly not true for all $A$ and $B$ with URI-densities: As a counterexample, consider the set $A$ of integers with leading digit 1 and the set $B$ of integers with leading digit 9. 

But what happens if for example $d(B)$ is the natural density whereas $d(A)$ is only the URI-density?

\subsection{Iterated URI-density}

Diaconis defined in~\cite{diaconis_thesis} the \emph{iterated log-density}: For a subset $A$ of $\ZZ_+$, set
$$ L(A,n,1)\egdef \dfrac{1}{\ln n} \sum_{j=1}^n \dfrac{1}{j}\ind{A}(j), $$
and inductively for all $\ell\ge2$:
$$ L(A,n,\ell)\egdef \dfrac{1}{\ln n} \sum_{j=1}^n \dfrac{1}{j} L(A,j,\ell-1). $$
The set $A$ has \emph{$\ell$-th log-density} $\alpha$ if the limit of the above exists as $n\to\infty$ and is equal to $\alpha$. In fact, this notion does not yield a new density, since Diaconis proved that $A$ has an $\ell$-th log-density if and only if $A$ has a log-density (and then, of course, both coincide). Then he proposed to define the \emph{$L_\infty$-density}, which extends the log-density in much the same way as $H_\infty$-density extends natural density: Consider $\lim_{\ell\to\infty}\liminf_{n\to\infty}L(A,n,\ell)$ and $\lim_{\ell\to\infty}\limsup_{n\to\infty}L(A,n,\ell)$. If the two limits are equal, call their common value the $L_\infty$-density of $A$. As shown in~\cite{diaconis2}, $L_\infty$-density is strictly stronger than log-density. 

It is natural in this context to study iterations of the URI-density, which can be defined as follows: Let $\left(\E^{(\ell)}\right)_{\ell\ge1}$ be a sequence of independent URI-sets, and denote the random elements of $\E^{(\ell)}$ by $N^{(\ell)}_1=1<N^{(\ell)}_2<\cdots<N^{(\ell)}_k<\cdots$. We say that $A\subset\ZZ_+$ has \emph{URI-density of order 2} equal to $\alpha$ if 
$$ \dfrac{1}{n} \sum_{k=1}^n\ind{A}\left(N^{(1)}_{N^{(2)}_k}\right) \tend[a.s.]{n}{\infty} \alpha. $$
We define the \emph{URI-density of order $\ell$} in the same way, averaging along the subsequence 
$N^{(1)}_{N^{(2)}_{\ddots_{N^{(\ell)}_k}}}$.

We can also introduce the infinite iteration of the URI-method, considering almost-sure limsup and liminf in the above expressions, and see if they converge to the same limit as $\ell\to\infty$.

Although we have shown that URI-density and log-density coincide, it is not obvious if there are connections between iterated URI-density and iterated log-density. Can URI-densities of finite order $\ell$ be strictly stronger than URI-density? Can we compare the infinite iteration of both methods?

\bibliography{uri}
\end{document}